\newtheorem{theorem}{Theorem}
\newtheorem{corollary}[theorem]{Corollary}
\newtheorem{lemma}[theorem]{Lemma}
\newtheorem{proposition}[theorem]{Proposition}
\newtheorem{problem}[theorem]{Problem}
\newenvironment{claimproof}{\noindent\textit{Proof.}}{\hfill$\square$}
\theoremstyle{definition}
\theoremstyle{remark}
\newtheorem{remark}[theorem]{Remark}
\crefname{remark}{Remark}{Remarks}
\theoremstyle{definition}
\newtheorem{claim}{Claim}
\theoremstyle{remark}
\DeclarePairedDelimiter\floor{\lfloor}{\rfloor}
\DeclareMathOperator{\conv}{conv}
\DeclareMathOperator{\st}{star}
\DeclareMathOperator{\a-st}{astar}
\newcommand{\D}{\mathcal D}
\newcommand{\R}{\mathbb{R}}
\newcommand{\C}{\mathcal{C}}
\newcommand{\B}{\mathcal{B}}
\newcommand{\St}{\mathcal{S}}
\renewcommand{\vec}[1]{\boldsymbol{{#1}}}
\crefname{claim}{Claim}{Claims}
\crefname{problem}{Problem}{Problems}
\title{Connectivity of cubical polytopes}
\author{Hoa T. Bui}
\address{Centre for Informatics and Applied Optimisation, Federation University Australia}
\email{\texttt{h.bui@federation.edu.au}}
\author{Guillermo Pineda-Villavicencio \& Julien Ugon}
\address{Centre for Informatics and Applied Optimisation, Federation University Australia\\School of Information Technology, Deakin University}
\email{\texttt{julien.ugon@deakin.edu.au}} 
\email{\texttt{work@guillermo.com.au}}
\thanks{Hoa T. Bui is supported by an Australian Government Research Training Program (RTP) Stipend and RTP Fee-Offset Scholarship through Federation University Australia. Julien Ugon's research was partially supported by ARC discovery project DP180100602.}
\keywords{cube, hypercube,  cubical polytope, connectivity, separator}
\subjclass[2010]{Primary 52B05; Secondary 52B12}
\begin{document}
 
\begin{abstract} A cubical polytope is a polytope with all its facets being combinatorially equivalent to cubes. We deal with the connectivity of the graphs of cubical polytopes. We first establish that, 
for any $d\ge 3$, the graph of a cubical $d$-polytope with minimum degree $\delta$  is $\min\{\delta,2d-2\}$-connected. Second, we show, for any $d\ge 4$,  that every minimum separator of cardinality at most $2d-3$  in such a graph  consists of  all the neighbours of some vertex and that removing the vertices of the separator from the graph leaves exactly two components, with one of them being the vertex itself.
\end{abstract} 
 
\maketitle       
\section{Introduction} 
The $k$-dimensional {\it skeleton} of a polytope $P$ is the set of all its faces of dimension of at most $k$. The 1-skeleton of $P$ is the {\it graph} $G(P)$ of $P$. We denote by $V(P)$ the vertex set of $P$.

This paper studies the (vertex) connectivity of a cubical polytope, the (vertex) connectivity of the graph of the polytope. A {\it cubical} $d$-polytope is a polytope with all its facets being cubes.  By a cube we mean any polytope that is combinatorially equivalent to a cube; that is, one whose face lattice  is isomorphic to the face lattice of a cube.

Unless otherwise stated, the graph theoretical notation and terminology follow from \cite{Die05} and the polytope theoretical notation and terminology from \cite{Zie95}. Moreover, when referring to graph-theoretical properties of a polytope such as degree and connectivity, we mean properties of its graph.

In  the three-dimensional world, Euler's formula implies that the graph of a cubical 3-polytope $P$ has $2|V(P)|-4$ edges, and hence its minimum degree is three; the {\it  degree}  of a vertex records the number of edges incident with the vertex. Besides, every 3-polytope is $3$-connected by Balinski's theorem \cite{Bal61}. Hence the dimension, minimum degree and connectivity of a cubical 3-polytope all coincide.

\begin{theorem}[Balinski {\cite{Bal61}}]\label{thm:Balinski} The graph of a $d$-polytope is $d$-connected. 
\end{theorem}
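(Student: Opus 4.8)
The plan is to show that, for any $d$-polytope $P$ and any set $N$ of $d-1$ vertices of $P$, the graph $G(P)-N$ is connected; since a $d$-polytope has at least $d+1$ vertices this is all that $d$-connectivity requires, and it suffices to treat sets $N$ with $|N|=d-1$. Take $P\subseteq\R^d$ full-dimensional. Since $\aff N$ has dimension at most $d-2$, I would choose a linear functional $\varphi$ that is constant on $N$, with value $\gamma$, and — as $P$ lies in no hyperplane — nonconstant on $P$; after replacing $\varphi$ by $-\varphi$ if needed, assume $M:=\max_P\varphi>\gamma$. If $\min_P\varphi=\gamma$, then every vertex outside $N$ has $\varphi$-value at least $\gamma$ and (by the monotone paths below) is joined to $F^+:=\{x\in P:\varphi(x)=M\}$ by a path avoiding $N$, so we are done; hence the substantive case is $\min_P\varphi<\gamma<M$.

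The main tool is the standard monotone-path lemma: if $\varphi(v)<\max_P\varphi$ then $v$ has a neighbour $w$ with $\varphi(w)>\varphi(v)$, since otherwise the tangent cone of $P$ at $v$ — generated by the edge directions at $v$ — would force $\varphi$ to be maximised at $v$. Iterating, every vertex is joined by a strictly $\varphi$-increasing path to $F^+$ and by a strictly decreasing one to $F^-:=\{x\in P:\varphi(x)=\min_P\varphi\}$. In the substantive case both $F^+$ and $F^-$ avoid $N$ (on them $\varphi\ne\gamma$), so each spans a connected subgraph of $G(P)-N$; let $C^+,C^-$ be the components of $G(P)-N$ containing them. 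A vertex $v\notin N$ with $\varphi(v)>\gamma$ reaches $F^+$ along a path staying above level $\gamma$, so $v\in C^+$; symmetrically $\varphi(v)<\gamma$ gives $v\in C^-$; and if $\varphi(v)=\gamma$ with $v\notin N$, then applying the lemma to $\varphi$ and to $-\varphi$ shows $v$ has a neighbour strictly above level $\gamma$ and one strictly below, so $v\in C^+\cap C^-$. Thus $V(P)\setminus N\subseteq C^+\cup C^-$, and it remains only to verify $C^+=C^-$; this is immediate if some vertex of $P$ at level $\gamma$ avoids $N$.

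The remaining case is that every vertex of $P$ with $\varphi$-value $\gamma$ lies in $N$, and here I would bridge $C^+$ and $C^-$ using the hyperplane section $Q:=P\cap\{\varphi=\gamma\}$. Since $\gamma$ is interior to the segment $\varphi(P)$, $Q$ is a $(d-1)$-dimensional polytope, while $\aff N$ is a flat of dimension at most $d-2$; hence $Q\not\subseteq\aff N$, so $Q$ has a vertex $z\notin N$. Every vertex of $Q$ is either a vertex of $P$ lying on the hyperplane or the relative-interior point where an edge of $P$ crosses the hyperplane transversally (one endpoint with $\varphi>\gamma$, the other with $\varphi<\gamma$); the first possibility is excluded by assumption, so $z$ comes from an edge $uv$ of $P$ with $\varphi(u)<\gamma<\varphi(v)$. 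Then $u,v\notin N$, the edge $uv$ survives in $G(P)-N$, and $u\in C^-$, $v\in C^+$ force $C^+=C^-$, so $G(P)-N$ is connected. I expect this last step to be the crux: the monotone-path machinery on its own connects everything above level $\gamma$ to $F^+$ and everything below to $F^-$ but cannot reach across the deleted set $N$, and it is precisely the dimension gap between the $(d-1)$-dimensional slice $Q$ and the $(\le d-2)$-dimensional flat $\aff N$ that manufactures the missing bridge. The other ingredients — the tangent-cone description behind the monotone-path lemma and the classification of the vertices of a hyperplane section of a polytope — are routine.
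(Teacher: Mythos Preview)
Your argument is correct; it is essentially Balinski's original proof via a linear functional constant on the deleted set, monotone edge-paths to the maximising and minimising faces, and a hyperplane-section bridge when all vertices on the critical level lie in $N$. The paper itself gives no proof of this theorem---it merely states it with a citation to Balinski's 1961 paper---so there is nothing further to compare.
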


This equality between dimension, minimum degree and connectivity of a cubical polytope no longer holds in higher dimensions. In Blind and Blind's classification of cubical $d$-polytopes where every vertex has  degree $d$ or $d+1$  \cite{BliBli98}, the authors exhibited cubical $d$-polytopes with the same graph as the $(d+1)$-cube; for an explicit example, check \cite[Sec.~4]{JosZie00}. More generally, the paper  \cite[Sec.~6]{JosZie00} exhibited cubical $d$-polytopes with the same $(\floor{d/2}-1)$-skeleton as the $d'$-cube for every $d'>d$, the so-called {\it neighbourly cubical $d$-polytopes}. And even more generally, Sanyal and Ziegler \cite[p.~422]{SanZie10}, and later Adin, Kalmanovich and Nevo \cite[Sec.~5]{AdiKalNev18}, produced cubical $d$-polytopes with the same $k$-skeleton as the $d'$-cube for every $1\le k\le \floor{d/2}-1$ and every $d'>d$,  the so-called {\it $k$-neighbourly cubical $d$-polytopes}. Thus the minimum degree or  connectivity of a cubical $d$-polytope for $d\ge 4$ does not necessarily coincide with its dimension; this is what one would expect. However, somewhat surprisingly, we can prove a result connecting the connectivity of a cubical polytope to its minimum degree, regardless of the dimension; this is a vast generalisation of a similar, and well-known, result in the $d$-cube \cite[Prop.~1]{Ram04}; see also \cref{sec:cube-connectivity}.   
      
\vspace{0.2cm}

Define a {\it separator} of a polytope as a set of vertices disconnecting the graph of the polytope. Let  $X$ be a set of vertices in a graph $G$.  Denote by $G[X]$ the subgraph of $G$ induced by $X$, the subgraph of $G$ that contains all the edges of $G$ with vertices in $X$.   Write $G-X$ for $G[V(G)\setminus X]$;  that is, the subgraph $G-X$ is obtained by removing the vertices in $X$ and their incident edges. Our main result is the following.

\vspace{0.2cm} 

\noindent {\bf Theorem} (Connectivity Theorem). {\it A cubical $d$-polytope $P$ with minimum degree $\delta$ is $\min\{\delta,2d-2\}$-connected for every $d\ge 3$. 

Furthermore, for any $d\ge 4$, every minimum separator $X$ of cardinality at most $2d-3$ consists of  all the neighbours of some vertex, and the subgraph $G(P)-X$ contains  exactly two components, with one of them being the vertex itself.}

 A {\it simple} vertex in a $d$-polytope is a vertex of degree $d$; otherwise we say that the vertex is {\it nonsimple}.  An immediate corollary of the theorem is the following.
 
\vspace{0.2cm}

\noindent {\bf Corollary.} {\it A cubical $d$-polytope with no simple vertices is $(d+1)$-connected.}
 
\begin{remark}
The connectivity theorem is best possible in the sense that there are infinitely many cubical $3$-polytopes with minimum separators not consisting of the neighbours of some vertex (\cref{fig:cubical-3-polytopes}). 
\end{remark} 
    
It is not hard to produce examples of polytopes with differing values of minimum degree and connectivity. The {\it connected sum} $P_{1}\#P_{2}$ of two $d$-polytopes $P_{1}$ and $P_{2}$ with  two projectively isomorphic facets $F_{1}\subset P_{1}$ and $F_{2}\subset P_{2}$ is obtained by gluing $P_{1}$ and $P_{2}$ along $F_{1}$ and $F_{2}$ \cite[Example~8.41]{Zie95}. Projective transformations on the polytopes $P_{1}$ and $P_{2}$, such as those in \cite[Def.~3.2.3]{Ric06}, may be required for $P_{1}\#P_{2}$ to be convex. \Cref{fig:connected-sum} depicts this operation. A connected sum of two copies of a cyclic $d$-polytope with $d\ge 4$ and $n\ge d+1$ vertices  (\cite[Thm.~0.7]{Zie95}), which is a polytope whose facets are all simplices, results in a $d$-polytope of minimum degree $n-1$ that is $d$-connected but not $(d+1)$-connected. 

\begin{figure}
\includegraphics{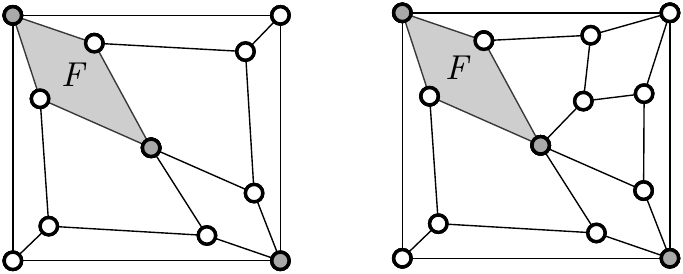}
\caption{Cubical $3$-polytopes with minimum separators not consisting of the neighbours of some vertex. The vertices of the separator are coloured in gray. The removal of the vertices of a face $F$ does not leave a 2-connected subgraph: the remaining vertex in gray disconnects the subgraph. Infinitely many more examples can be generated by using well-known expansion operations such as those in \cite[Fig.~3]{BriGreGre05}.}\label{fig:cubical-3-polytopes}
\end{figure}

\begin{figure}
\includegraphics{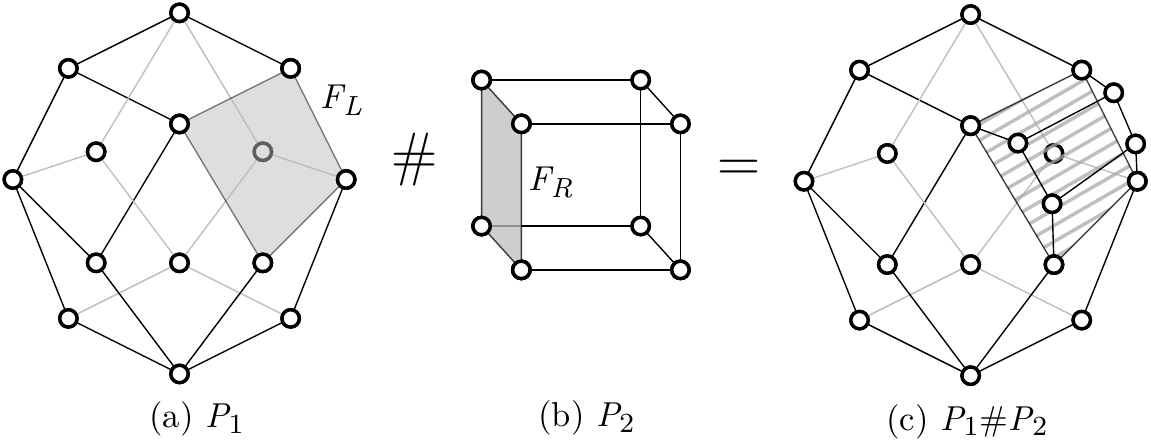}
\caption{Connected sum of two cubical polytopes.}\label{fig:connected-sum} 
\end{figure} 

On our way to prove the connectivity theorem we prove  results of independent interest, for instance, the following (\cref{cor:Removing-facet-(d-2)-connectivity} in \cref{sec:cubical-connectivity}). 

\vspace{0.2cm}

\noindent {\bf Corollary. }{\it   Let $P$ be a cubical $d$-polytope and let $F$ be a proper face of $P$. Then the subgraph  $G(P)-V(F)$ is $(d-2)$-connected.} 

\vspace{0.2cm}

\begin{remark}
The examples of \cref{fig:cubical-3-polytopes} also establish that the previous corollary is best possible in the sense that the removal of the vertices of a proper face $F$ of a cubical $d$-polytope does not always leave a $(d-1)$-connected subgraph of the graph of the polytope.
\end{remark}  

The corollary gives another unusual property of cubical polytopes. A tight result of Perles and Prabhu \cite[Thm.~1]{PerPra93} implies that the removal of the vertices of any $k$-face ($-1\le k\le d-1$) from a $d$-polytope leaves a $\max\{1,d-k-1\}$-connected subgraph of the graph of the polytope. 
 
The connectivity theorem also gives rise to the following corollary and  open problem. 
\vspace{0.2cm}

\noindent {\bf Corollary.} {\it There are functions $f:\mathbb{N}\rightarrow \mathbb{N}$ and $g:\mathbb{N}\rightarrow \mathbb{N}$ such that, for every $d\ge 4$, 
\begin{enumerate}[(i)]
\item the function $f(d)$ gives the maximum number such that every cubical $d$-polytope with minimum degree $\delta\le f(d)$ is $\delta$-connected; 
\item the function $g(d)$ gives the maximum number such that every minimum separator with cardinality at most $g(d)$ of every cubical $d$-polytope consists of the neighbourhood of some vertex; and
\item $2d-3\le g(d)$ and $g(d)< f(d)$.
\end{enumerate}}

An exponential bound in $d$ for $f(d)$ is readily available. The connected sum of two copies of a neighbourly cubical $d$-polytope with minimum degree $\delta> 2^{d-1}$, which exists by \cite[Thm.~16]{JosZie00}, results in a cubical $d$-polytope with minimum degree $\delta$ and with a minimum separator of cardinality $2^{d-1}$, the number of vertices of the facet along which we glued. The cardinality of this separator gives at once the announced upper bound. This exponential bound in conjunction with  the connectivity theorem gives that 
\begin{equation}\label{eq:bounds}
	2d-3\le g(d)< f(d)\le 2^{d-1}.
\end{equation} 
The following problem naturally arises.
  
\begin{problem}\label{prob:bounds}
For $d\ge 4$ provide precise values for the functions $f(d)$ and $g(d)$ or improve the lower and upper bounds in \eqref{eq:bounds}. 
\end{problem}
We suspect that both functions are linear in $d$.

Using ideas developed in this paper, we studied further connectivity properties of cubical $d$-polytopes \cite{ThiPinUgo18}. A graph is {\it $k$-linked} if, for every set of $2k$ distinct vertices $\{s_{1},\dots,s_{k},t_{1},\ldots,t_{k}\}$, there exist disjoint paths $L_{1},\ldots, L_{k}$ such that the endpoints of $L_{i}$ are $s_{i}$ and $t_{i}$.  We proved that the graph of a cubical $d$-polytope is $\floor{(d+1)/2}$-linked for $d\ne 3$. 
 
\section{Preliminary results}
\label{sec:preliminary}
This section groups a number of  results that will be used in later sections of the paper. 
 
The definitions of polytopal complex and strongly connected complex play an important role in the paper. A {\it polytopal complex} $\C$ is a finite nonempty collection of polytopes in $\R^{d}$ where the faces  of each polytope in $\C$ all belong to $\C$ and where polytopes intersect only at faces (if $P_{1}\in \C$ and $P_{2}\in \C$ then $P_{1}\cap P_{2}$ is a face of both $P_{1}$ and $P_{2}$). The empty polytope is always in $\C$. The {\it dimension} of a complex $\C$ is the largest dimension of a polytope in $\C$;  if $\C$ has dimension $d$ we say that $C$ is a {\it $d$-complex}. Faces of a complex of largest and second largest dimension are called {\it facets} and {\it ridges}, respectively. If each of the faces of a complex $\C$ is contained in some facet we say that $\C$ is {\it pure}. 

Given  a polytopal complex $\C$ with vertex set $V$ and a subset $X$ of $V$,  the subcomplex of $\C$ formed by all the faces of $\C$ containing only vertices from $X$ is called {\it induced} and is denoted by $\C[X]$.  Removing from $\C$  all the vertices in a subset $X\subset V(\C)$ results in the subcomplex $\C[V(\C)\setminus X]$, which we write as $\C-X$. We say that a subcomplex $\C'$ is a {\it spanning} subcomplex of $\C$ if $V(\C')=V(\C)$. The {\it graph} of a complex is the undirected graph formed by the vertices and edges of the complex. As in the case of polytopes, we denote the graph of a complex $\C$ by $G(\C)$.  A pure polytopal complex $\C$ is {\it strongly connected} if every pair of facets $F$ and $F'$ is connected by a path $F_{1}\ldots F_{n}$ of facets in $\C$ such that $F_{i}\cap F_{i+1}$ is a ridge of $\C$, $F_{1}=F$, and $F_{n}=F'$; we say that such a path is a {\it $(d-1,d-2)$-path} or a {\it facet-ridge path} if the dimensions of the faces can be deduced from the context. From the definition, it follows that every 0-complex is trivially strongly connected and that every complex contains a spanning 0-subcomplex.  
 
The relevance of strongly connected complexes stems from the ensuing result of Sallee.

\begin{proposition}[{\cite[Sec.~2]{Sal67}}]\label{prop:connected-complex-connectivity} The graph of a strongly connected $d$-complex is $d$-connected. 
\end{proposition}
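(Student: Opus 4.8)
My plan is to deduce the $d$-connectivity of $G(\C)$ directly from Balinski's theorem (\cref{thm:Balinski}) applied to the individual facets of $\C$, stitching facets together along the facet-ridge paths guaranteed by strong connectivity. Since $\C$ is a $d$-complex, its facets are $d$-polytopes and its ridges are $(d-1)$-polytopes. By \cref{thm:Balinski}, $G(F)$ is $d$-connected for every facet $F$; in particular $|V(F)|\ge d+1$, so $|V(\C)|\ge d+1$, which settles the cardinality requirement in the definition of $d$-connectivity.

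It then remains to prove that $G(\C)-S$ is connected for every $S\subseteq V(\C)$ with $|S|\le d-1$. So I would fix such an $S$ and two vertices $u,v\in V(\C)\setminus S$. Because $\C$ is pure, $u$ and $v$ lie in facets $F$ and $F'$ respectively, and strong connectivity provides a facet-ridge path $F=F_{1},F_{2},\dots,F_{n}=F'$ in which each $R_{i}:=F_{i}\cap F_{i+1}$ is a ridge of $\C$.

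The \emph{key observation} is that each ridge $R_{i}$, being a $(d-1)$-polytope, has at least $d$ vertices, and $d>d-1\ge|S|$; hence $V(R_{i})\setminus S\ne\emptyset$, and I may pick $w_{i}\in V(R_{i})\setminus S$. Setting $w_{0}:=u$ and $w_{n}:=v$, for each $i\in\{1,\dots,n\}$ both $w_{i-1}$ and $w_{i}$ lie in $V(F_{i})\setminus S$ (using $R_{i-1},R_{i}\subseteq F_{i}$). Since $G(F_{i})$ is $d$-connected and we are deleting fewer than $d$ vertices, $G(F_{i})-S$ is connected, so it contains a path $Q_{i}$ from $w_{i-1}$ to $w_{i}$; as $G(F_{i})-S$ is a subgraph of $G(\C)-S$, concatenating $Q_{1},\dots,Q_{n}$ yields a $u$–$v$ walk in $G(\C)-S$. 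Thus $G(\C)-S$ is connected, and together with $|V(\C)|\ge d+1$ this gives that $G(\C)$ is $d$-connected.

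I do not expect a serious obstacle here: the argument rests entirely on the two facts that each facet's graph is $d$-connected (Balinski) and that each ridge is too large to be absorbed by a set of at most $d-1$ vertices, so consecutive facets along the path always share a surviving vertex. The one point to be careful about is precisely this counting with ridges (and the degenerate case $F=F'$, where the path has length one and the argument collapses to connectivity within a single facet). A more self-contained alternative would be to imitate Balinski's linear-functional argument directly on $\C$, but that forces one to track partial faces of the complex and is messier; the facet-hopping argument above is the cleanest route and is in the spirit of Sallee's original proof.
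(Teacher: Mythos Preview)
Your argument is correct. The paper does not give its own proof of \cref{prop:connected-complex-connectivity}; it simply quotes the result from Sallee~\cite{Sal67}. Your facet-hopping argument (Balinski on each facet, ridges too large to be swallowed by a set of size $\le d-1$, concatenate paths along a facet-ridge chain) is precisely the standard proof and is in fact the very mechanism the paper deploys later, e.g.\ in the proof of \cref{cl:d-facets} inside \cref{thm:cubical-connectivity}, where a $u$--$v$ path is built by traversing a $(d-1,d-2)$-path of facets and using that each ridge retains a vertex outside the deleted set.
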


Strongly connected complexes can be defined from a $d$-polytope $P$. Two basic examples are given by the complex of all faces of $P$, called the {\it complex} of $P$ and denoted by $\C(P)$, and the complex of all proper faces of $P$, called the {\it boundary complex} of $P$ and denoted by $\B(P)$.   For a polytopal complex $\C$, the {\it star} of a face $F$ of $\C$, denoted $\st(F,\C)$, is the subcomplex of $\C$ formed by all the faces containing $F$, and their faces; the {\it antistar} of a face $F$ of $\C$, denoted $\a-st(F,\C)$, is the subcomplex of $\C$ formed by all the faces disjoint from $F$. That is, $\a-st(F,\C)=\C-V(F)$. Unless otherwise stated, when defining stars and antistars in a polytope, we always assume the underlying complex is the boundary complex of the polytope.    
  
Some complexes defined from a $d$-polytope are strongly connected $(d-1)$-complexes, as the next proposition attests; the parts about the boundary complex and the antistar of a vertex already appeared in \cite{Sal67}. 

\begin{proposition}[{\cite[Cor.~11, Thm.~3.5]{Sal67}}]\label{prop:st-ast-connected-complexes} Let $P$ be a $d$-polytope. Then, the boundary complex $\B(P)$ of $P$, and the star and antistar of a vertex in $\B(P)$, are all strongly connected $(d-1)$-complexes of $P$.
\end{proposition}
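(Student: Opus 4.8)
The plan is to argue by induction on $d$; the cases $d\le 2$ are immediate (for $d=2$ the boundary complex is the cycle of edges, the star of a vertex is the pair of edges at it, and the antistar is the complementary path of edges). So assume $d\ge 3$ and that all three statements hold in dimension $<d$, and fix a $d$-polytope $P$ and a vertex $v$. I would first dispose of the purity claims: $\B(P)$ is pure because every proper face lies in a facet; the star $\st(v,\B(P))$ is pure because any face $F\ni v$ lies in a facet $G\supseteq F$, and $G$ then also contains $v$; and the antistar is pure by the standard fact that a proper face of a polytope is the intersection of the facets containing it, since a proper face $F$ with $v\notin F$ must then lie in some facet omitting $v$ (otherwise $v$ would lie in $F$). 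Thus all three are pure $(d-1)$-complexes.

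For the star, observe that its facets are exactly the facets of $P$ through $v$, which correspond bijectively to the facets of the vertex figure $P/v$ (a $(d-1)$-polytope whose $k$-faces correspond to the $(k+1)$-faces of $P$ through $v$), while its ridges that contain $v$ correspond to the ridges of $P/v$. Moreover a ridge $R$ of the star with $v\notin R$ lies in only one facet of the star, because the two facets of $P$ through the ridge $R$ cannot both contain $v$ (else $v\in R$). Hence facet-ridge paths among the facets of the star are precisely the facet-ridge paths of $\B(P/v)$, which is strongly connected by the inductive hypothesis, so the star is strongly connected. The boundary complex then follows by stitching stars together: given facets $F,F'$ of $P$, pick $u\in V(F)$, $u'\in V(F')$, a path $u=u_0,\dots,u_\ell=u'$ in $G(P)$ (connected by \cref{thm:Balinski}), and a facet $B_i\supseteq\{u_{i-1},u_i\}$ for each $i$; setting $B_0=F$, $B_{\ell+1}=F'$, any two consecutive members of $B_0,\dots,B_{\ell+1}$ share a vertex and so are joined by a facet-ridge path inside the corresponding star, and since the ridges of a star are ridges of $\B(P)$, the concatenation is a facet-ridge path from $F$ to $F'$.

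The antistar is the step I expect to be the real obstacle. Here the plan is to truncate $v$: choose a hyperplane $H$ strictly separating $v$ from the other vertices, let $H^-$ be the closed halfspace avoiding $v$, and set $P'=P\cap H^-$, again a $d$-polytope, whose facets are the new facet $Q=P\cap H$, the facets of $P$ omitting $v$ (unchanged), and the truncations $G\cap H^-$ of the facets $G\ni v$. Applying the boundary-complex case just proved (in dimension $d$) to $P'$ yields a facet-ridge path in $\B(P')$ between any two facets omitting $v$; what remains---and this is the hard part---is to re-route such a path so that it avoids $Q$ and every truncated facet, equivalently to prove that the subcomplex of $\B(P')$ generated by the facets omitting $v$, which is precisely the antistar of $v$ in $\B(P)$, is strongly connected. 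I would attack this by showing that each maximal stretch of the path that runs among $Q$ and the truncated facets can be replaced by a detour through facets omitting $v$, using the inductive hypothesis (the antistar statement in dimension $d-1$) applied inside the facets where the stretch enters and leaves; alternatively, one can appeal to the shellability of the antistar of a vertex in a polytope boundary together with the fact that a pure shellable complex is strongly connected, since each of its facets after the first meets the union of the preceding facets in a nonempty union of ridges.
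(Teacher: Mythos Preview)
Your approach differs substantially from the paper's. The paper handles all three cases uniformly via duality, with no induction: letting $\psi$ be the natural anti-isomorphism from the face lattice of $P$ to that of $P^*$, facet-ridge paths in $\B(P)$, in the star of $v$, and in the antistar of $v$ correspond respectively to vertex-edge paths in $G(P^*)$, in the graph of the facet $\psi(v)$ of $P^*$, and in $G(P^*)-V(\psi(v))$. The first two graphs are connected by Balinski's theorem, and the third by Sallee's result that removing the vertices of a facet does not disconnect the graph of a polytope.

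Your inductive arguments for the star (via the vertex figure $P/v$) and for the boundary complex (stitching stars along a path in $G(P)$) are correct, if longer; the vertex-figure argument is in fact dual to the paper's, since $P/v$ is combinatorially dual to the facet $\psi(v)$. For the antistar, however, your primary plan has a gap. After truncating $v$ and taking a facet-ridge path in $\B(P')$ between two unchanged facets, you propose to replace each maximal stretch through ``bad'' facets (the truncated ones and $Q$) by a detour through unchanged facets, invoking the inductive antistar hypothesis in dimension $d-1$ ``inside the facets where the stretch enters and leaves''. But that hypothesis concerns ridges inside a single facet; it says nothing about how to join the entry facet $F_1$ to the exit facet $F_2$ through \emph{other} unchanged facets of $P$, which is precisely the global statement you are trying to prove. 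Your shellability alternative does work---a line shelling of $\B(P)$ can be arranged so that the facets through $v$ come last, making the antistar an initial segment and hence shellable and strongly connected---but it imports heavier machinery than the paper's one-line duality reduction, and you have left it as an unproved appeal.
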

\begin{proof} Let $\psi$ define the natural anti-isomorphism from the face lattice of $P$ to the face lattice of its dual $P^{*}$. 

The three complexes are pure. The complex $\B(P)$ is clearly pure, and so is the star of a vertex. Perhaps a sentence may be appropriate for the antistar of a vertex: a face of $P$ that does not contain the vertex must lie in a facet that does not contain the vertex.   We proceed to prove the  strong connectivity of the complexes.
   
The statement about $\B(P)$ was already proved in \cite[Cor.~2.11]{Sal67}. The facets in $\B(P)$ correspond to vertices in $P^{*}$. The existence of a facet-ridge path in $\B(P)$ between any two facets $F_{1}$ and $F_{2}$ of $\B(P)$ amounts to the existence of a vertex-edge path in $P^{*}$ between the vertices $\psi(F_{1})$ and $\psi(F_{2})$ of $P^{*}$. That $\B(P)$ is a strongly connected $(d-1)$-complex now follows from the connectivity of the graph of $P^{*}$ (Balinski's theorem).
 
The assertion about the star of a vertex does not seem to explicitly appear in \cite{Sal67}. The facets in the star $\St$ of a vertex $v$ in $\B(P)$ correspond to the vertices in the facet $\psi(v)$ in $P^{*}$. The existence of a facet-ridge path in $\St$ between any two facets $F_{1}$ and $F_{2}$ of $\St$ amounts to the existence of a vertex-edge path in $\psi(v)$ between the vertices $\psi(F_{1})$ and $\psi(F_{2})$ of $\psi(v)$. That $\St$ is a strongly connected $(d-1)$-complex follows from the connectivity of the graph of $\psi({v})$ (Balinski's theorem).

The assertion about the antistar  of a vertex $v$ was first shown  in \cite[Thm.~3.5]{Sal67}. The facets in $\a-st(v)$ correspond to the vertices of $P^{*}$ that are not in $\psi(v)$. That is, if $F_1$ and $F_{2}$ are any two facets of $\a-st(v)$, then  $\psi(F_{1}),\psi(F_{2})\in V(P^{*})\setminus V(\psi(v))$. The existence of a facet-ridge path between $F_1$ and $F_{2}$ in $\a-st(v)$  amounts to the existence of a vertex-edge path between $\psi(F_{1})$ and $\psi(F_{2})$ in the subgraph $G(P^{*})- V(\psi(v))$ of $G(P^{*})$. The removal of the vertices of a facet does not disconnect the graph of a polytope \cite[Thm.~3.1]{Sal67}, wherefrom it follows that $G(P^{*})- V(\psi(v))$ is connected, as desired.
\end{proof}

\section{Connectivity of the $d$-cube}   
\label{sec:cube-connectivity}
We unveil some further properties of the cube, whose  proofs exploit the realisation of a $d$-cube as a $0-1$ $d$-polytope \cite{Zie00}. A {\it $0-1$ $d$-polytope} is a $d$-polytope whose vertices have coordinates in  $\{0,1\}^{d}$. Here $\{0,1\}^{d}$ denotes the set of all $d$-element sequences from $\{0,1\}$. 
  
We next give some basic properties of the $d$-cube, including some specific to its realisation as a $0-1$ polytope. 

\begin{remark}[Basic properties of the $d$-cube]\label{rmk:cube-properties} Let $\vec x=(x_{1},\ldots,x_{d})$ with $x_{i}\in\{0,1\}$ be a vertex of the $0-1$ $d$-cube $Q_{d}$. 
\begin{enumerate}[(i)]
\item Every two facets of $Q_{d}$ either intersect at a ridge or are disjoint.
\item Each of the $2d$ facets of $Q_{d}$ is the convex hull of a set of the form 
\begin{align*}
F_{i}^{0}:=\conv \{\vec x\in V(Q_{d}): x_{i}=0\}\;\text{or}\; F_{i}^{1}:=\conv \{\vec x\in V(Q_{d}): x_{i}=1\},
\end{align*} 
for $i$ in $[1,d]$, the interval $1,\ldots,d$.

\item A $(d-k)$-face is the intersection of exactly $k$ facets, and thus, its vertices have the form \[\{\vec x \in V(Q_{d}): x_{i_{1}}=0,\ldots,x_{i_{r}}=0,x_{i_{r+1}}=1,\ldots,x_{i_{k}}=1\}\] for $k\in [1,d]$ and $r\in[0,k]$.
\end{enumerate}
	  
\end{remark}

While it is true that the antistar of a vertex in a $d$-polytope is always a strongly connected $(d-1)$-complex (\cref{prop:st-ast-connected-complexes}), it is far from true that this extends to higher dimensional faces. Consider any $d$-polytope $P$ with a simplex facet $J$ that contains at least one vertex $v$ of degree $d$ in $P$. Let  $F$ be any face in $J$ that does not contain $v$. Then the vertex $v$ has degree $d-|V(F)|$ in  the subcomplex $P- V(F)$. Since every vertex in a pure $(d-1)$-complex has degree at least $d-1$, the antistar of $F$ in $\B(P)$, which contains $v$, cannot be a pure $(d-1)$-complex for $\dim F\ge 1$. This extension is however possible for the $d$-cube.

\begin{lemma}
\label{lem:cube-face-complex} Let $F$ be a proper face in the $d$-cube $Q_{d}$. Then the antistar of $F$ is a strongly connected $(d-1)$-complex. 
\end{lemma}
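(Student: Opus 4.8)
The plan is to use the $0$–$1$ realisation of $Q_d$ together with \Cref{rmk:cube-properties}. Write $F$ as the face $\{\vec x : x_{i_1}=\varepsilon_1,\dots,x_{i_k}=\varepsilon_k\}$ for some coordinate set $I=\{i_1,\dots,i_k\}$ and signs $\varepsilon_j\in\{0,1\}$. A facet of $Q_d$ is disjoint from $F$ precisely when it has the form $F_{i_j}^{1-\varepsilon_j}$ for some $j$; these are the $k$ facets ``opposite'' to the defining halfspaces of $F$. So $\a\text{-}\mathrm{st}(F,\B(Q_d))$ is the pure $(d-1)$-complex whose facets are exactly $F_{i_1}^{1-\varepsilon_1},\dots,F_{i_k}^{1-\varepsilon_k}$, together with all their faces. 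Purity is immediate from \Cref{rmk:cube-properties}(iii): any face of $Q_d$ disjoint from $F$ must violate at least one of the constraints $x_{i_j}=\varepsilon_j$, hence lies in some $F_{i_j}^{1-\varepsilon_j}$. It remains to prove strong connectivity, i.e.\ that any two of these $k$ facets are joined by a facet-ridge path within the antistar.

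**Key steps.** First, after relabelling coordinates and swapping $0\leftrightarrow1$ in the relevant coordinates (an affine symmetry of the cube preserving the combinatorial structure), I may assume $I=\{1,\dots,k\}$ and every $\varepsilon_j=1$, so the antistar facets are $F_1^0,\dots,F_k^0$. Second, I claim that for $j\ne j'$ the facets $F_j^0$ and $F_{j'}^0$ intersect in the ridge $\{\vec x: x_j=0,\ x_{j'}=0\}$, which is a genuine $(d-2)$-face of $Q_d$ and — crucially — still disjoint from $F$ (it violates $x_j=1$), hence lies in the antistar. Therefore $F_1^0,\dots,F_k^0$ pairwise share ridges of the antistar, and the facet-ridge adjacency graph on these $k$ facets is complete, so in particular connected; this gives a facet-ridge path between any two of them. (If $k=1$ there is nothing to prove; if $k=0$ then $F$ is empty, excluded since $F$ is a proper face, or one simply takes $\a\text{-}\mathrm{st}=\B(Q_d)$ which is handled by \Cref{prop:st-ast-connected-complexes}.) This establishes that the antistar is a strongly connected $(d-1)$-complex.

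**Main obstacle.** The only subtle point is verifying that the ridge $F_j^0\cap F_{j'}^0$ genuinely belongs to the antistar — that is, checking disjointness from $F$ at the level of vertex sets rather than being misled by the lower-dimensional intersections. Once one writes everything in $0$–$1$ coordinates this is a one-line check ($F$ requires $x_1=\dots=x_k=1$, while the ridge has $x_j=x_{j'}=0$), so in fact there is no serious obstacle; the lemma reduces to bookkeeping with the coordinate description in \Cref{rmk:cube-properties}. I would also remark that the same argument shows the facet-ridge graph of the antistar restricted to its facets is $K_k$, which is stronger than bare strong connectivity and may be convenient later. The contrast with the general polytope case (flagged in the paragraph preceding the lemma) is exactly that in a simplex facet a low-dimensional face $F$ can leave a vertex of too-small degree in the antistar, whereas in the cube every facet disjoint from $F$ is a full $(d-1)$-cube and the antistar stays pure.
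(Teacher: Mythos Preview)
Your proposal is correct and follows essentially the same route as the paper: identify the antistar as the union of the $k$ ``opposite'' coordinate facets $F_{i_j}^{1-\varepsilon_j}$, check purity via the coordinate description in \cref{rmk:cube-properties}(iii), and obtain strong connectivity from the fact that any two of these facets meet in a ridge. The only cosmetic differences are that the paper normalises to $\varepsilon_j=0$ and proves the purity inclusion by contrapositive; one trivial slip in your write-up is that $k=0$ would correspond to $F=Q_d$, not $F=\emptyset$, but this case does not arise for a proper face anyway.
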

\begin{proof} Without loss of generality, assume that  $Q_{d}$  is given as a $0-1$ polytope, and for the sake of concreteness, that our proper face $F$ is defined as $\conv \{\vec x\in V(Q_{d}): x_{1}=0,\ldots,x_{k}=0\}$ (\cref{rmk:cube-properties}(iii)). That is, $F=F_{1}^{0}\cap \cdots\cap F_{k}^{0}$; refer to \cref{rmk:cube-properties}(ii).

We claim that the antistar of $F$ is the pure $(d-1)$-complex \[\C:=\C(F_{1}^{1})\cup \cdots\cup \C(F_{k}^{1});\] refer to \cref{rmk:cube-properties}(ii)-(iii).

We proceed by proving that $\a-st(F,Q_{d})\subseteq \C$. Take any $(d-l)$-face $K\not \in \C$. Then $K=J_{1}\cap \cdots \cap J_{l}$ for some facets $J_{i}$ of $Q_{d}$. A facet $J_{i}$ is defined by either $\conv \{\vec x:V(Q_{d}):x_{j}=1\; \text{for some $j\in [k+1,d]$}\}$ or $\conv \{\vec x:V(Q_{d}):x_{j}=0\; \text{for some $j\in [1,d]$}\}$.  According to \cref{rmk:cube-properties}(iii), for $l\in [1,d]$ and $r\in[0,l]$, we get that \[K=\conv \{\vec x\in V(Q_{d}):x_{i_{1}}=0,\ldots,x_{i_{r}}=0,x_{i_{r+1}}=1,\ldots,x_{i_{l}}=1\}.\] From the form of the facets $J_{i}$ it follows that  $i_{j}\ge k+1$ for all $j\in [r+1, l]$. Hence there is a vertex $\vec x=(x_{1},\ldots,x_{d})$ in $K$ satisfying $x_{1}=\cdots =x_{k}=0$, which implies that $\{\vec x\}\subset K\cap F$. That is, $K\not \in \a-st(F,Q_{d})$. 

To prove that $\C \subseteq \a-st(F,Q_{d})$ holds, observe that, if $K\in \C$ then it is in a facet $F_{i}^{1}$ for some $i\in[1,k]$, and therefore, it belongs to $\a-st(F,Q_{d})$. Hence $\C=\a-st(F,Q_{d})$.

That $\C$ is strongly connected follows from noting that the facets $F_{1}^{1}, \ldots, F_{k}^{1}$ are pairwise nondisjoint, and therefore, pairwise intersect at $(d-2)$-faces (\cref{rmk:cube-properties}(i)). 
\end{proof}

 \cref{prop:known-cube-cutsets} is well known \cite[Prop.~1]{Ram04}, but we are not aware of a reference for \cref{prop:cube-cutsets}. 

\begin{proposition}[{\cite[Prop.~1]{Ram04}}]\label{prop:known-cube-cutsets}
Any separator $X$ of cardinality $d$ in $Q_{d}$ consists of the $d$ neighbours of some vertex in the cube, and the subgraph $G(Q_{d})-X$ has exactly two components, with one of them being the vertex itself.
\end{proposition}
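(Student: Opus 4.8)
The plan is to work with the $0$-$1$ realisation of $Q_d$ and argue that any $d$-element separator must be, up to relabelling coordinates, the neighbourhood of the all-zeros vertex. First I would set up the standard fact that the graph $G(Q_d)$ is $d$-connected and $d$-regular, so that a separator of cardinality exactly $d$ is a \emph{minimum} separator and $G(Q_d)-X$ has at least two components. Writing $A$ for one component of $G(Q_d)-X$ and $B$ for the union of the others, I would count edges leaving $A$: since every vertex of $A$ has degree $d$ and all its neighbours outside $A$ lie in $X$, and since $|X|=d$, a short edge-counting/pigeonhole argument forces either $|A|=1$ (so $A=\{v\}$ and then $X=N(v)$, exactly as claimed) or $A$ to be ``large'' in a way that we must exclude. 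The key quantitative input is the edge-isoperimetric inequality for the cube, or more elementarily: if $|A|=m\ge 2$ then the number of edges from $A$ to $V(Q_d)\setminus A$ is at least $m$ (each vertex of $A$ sends at least one edge out unless $A$ is a subcube, and a proper subcube of dimension $k\ge1$ sends out exactly $(d-k)2^k\ge d$ edges with equality only when $k=1$), and this quantity is at most $\sum_{x\in X}\deg_B(x)\le$ something controlled by $|X|=d$.

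Concretely, the cleanest route: let $A$ be the \emph{smallest} component of $G(Q_d)-X$. Every vertex of $A$ has all of its at most $d$ neighbours either in $A$ or in $X$; counting edges between $A$ and $X$ gives $e(A,X)=\sum_{v\in A}(d-\deg_A(v))=d|A|-2e(A)$. On the other hand $e(A,X)\le \sum_{x\in X}\deg_A(x)\le d\cdot\min\{|A|,\,\text{something}\}$; but more usefully, since $X$ also has neighbours in the other component(s) $B$, which is nonempty, we get $e(A,X)\le \sum_{x\in X}\big(d-\deg_B(x)-\deg_X(x)\big)\le d|X|-|X|=d(d-1)$ once we note each $x\in X$ has at least one neighbour in $B$. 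Combining, $d|A|-2e(A)\le d(d-1)$. Since $G(Q_d)[A]$ is a subgraph of the cube it has at most $\tfrac{|A|}{2}\log_2|A|$ edges (again the cube's isoperimetry), so $d|A|-|A|\log_2|A|\le d(d-1)$, forcing $|A|$ small. A direct check then eliminates $2\le|A|\le d-1$ (a connected set of that size in the cube has too few edges to make the inequality tight) and $|A|=1$ yields the claim; simultaneously $B$ must also be a single vertex by the symmetric argument applied with the roles reversed, giving exactly two components.

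The main obstacle I expect is making the edge-count genuinely \emph{tight} rather than merely order-of-magnitude: the inequality $e(A,X)\le d(d-1)$ only \emph{just} fails to allow $|A|=2$ in low dimensions, so I would need the sharp form of the cube's vertex-isoperimetric/edge-isoperimetric inequality (Harper's theorem, or the elementary bound $e(A)\le\tfrac12|A|\log_2|A|$ with equality iff $A$ is a subcube) and a careful case analysis for the smallest few values of $|A|$. An alternative that avoids isoperimetry: observe that $X$ has exactly $d$ vertices and partitions into those coordinates $F_i^0\cap X$, $F_i^1\cap X$; if $X$ is a separator then some facet $F_i^\epsilon$ must ``cut'' things, and by \cref{lem:cube-face-complex} (or \cref{prop:st-ast-connected-complexes}) the antistar of any vertex of $X$ is connected, which pins down the structure of $A$. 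I would likely present the isoperimetric argument as the main line and remark that the structural lemmas give an independent verification for small $|A|$.
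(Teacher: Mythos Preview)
Your approach via edge-isoperimetry is genuinely different from the paper's, which simply cites \cite{Ram04} and sketches an induction on $d$ using a pair of opposite facets. Unfortunately, the isoperimetric line as you have written it does not go through, for two independent reasons.

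First, the inequality $d|A|-|A|\log_2|A|\le d(d-1)$ does \emph{not} force $|A|$ small. The function $m\mapsto m(d-\log_2 m)$ is maximised near $m\approx 2^{d-1/\ln 2}$, where it takes the value roughly $2^{d-1.44}/\ln 2$; for small $d$ (for example $d=5$) this maximum is below $d(d-1)$, so the inequality holds for \emph{every} $m$ and eliminates nothing. Your ``direct check'' for $2\le |A|\le d-1$ is therefore carrying the entire weight of the argument, and the stated reason (``too few edges to make the inequality tight'') points in the wrong direction: fewer internal edges in $A$ makes $e(A,X)=d|A|-2e(A)$ \emph{larger}, which is what you would want for a contradiction, but you never exhibit a matching upper bound sharper than $d(d-1)$. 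What you actually need here is the \emph{vertex}-isoperimetric inequality (Harper's theorem), which gives directly that $|N(A)|>d$ whenever $2\le |A|\le 2^d-d-2$; you mention Harper only as a fallback, but it is essential, not optional.

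Second, the assertion that ``$B$ must also be a single vertex by the symmetric argument'' is false: if $X=N(v)$ then the other side has $2^d-d-1$ vertices, and for $d\ge 3$ this exceeds $1$. The symmetric argument only shows that the \emph{smallest} component is a singleton; it says nothing about the number of components. To conclude there are exactly two components you must show that $G(Q_d)-(\{v\}\cup N(v))$ is connected, which is precisely the content of \cref{prop:cube-cutsets} (combined with \cref{prop:connected-complex-connectivity}). The paper's inductive approach handles both the identification of $X$ and the component count simultaneously, which is one advantage of that route.
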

\begin{proof}
A proof can be found in \cite[Prop.~1]{Ram04}: essentially, one proceeds by induction on $d$, considering the effect of the separator on a pair of disjoint facets.  
\end{proof}

\begin{proposition}\label{prop:cube-cutsets}  Let $y$ be a vertex of the $d$-cube $Q_{d}$  and let   $Y$ be a subset of the neighbours of $y$ in $Q_{d}$. Then the  subcomplex of $Q_{d}$ induced by  $V(Q_{d})\setminus (\{y\}\cup Y)$  contains a spanning strongly connected $(d-2)$-subcomplex.
\end{proposition}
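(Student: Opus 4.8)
The plan is to realise $Q_d$ as a $0-1$ polytope (as in \cref{rmk:cube-properties}) and to exhibit the required subcomplex explicitly. So take $y=\vec 0$, so that the neighbours of $y$ are the unit vectors $\vec e_1,\dots,\vec e_d$; write $Y=\{\vec e_i:i\in S\}$ for some $S\subseteq[1,d]$ and set $W:=V(Q_d)\setminus(\{y\}\cup Y)$. With the facets $F_i^{0},F_i^{1}$ of \cref{rmk:cube-properties}(ii), I would let $\C$ be the subcomplex of $\C(Q_d)$ consisting of all faces of the $(d-2)$-faces of the two kinds
\[
\text{(a)}\quad F_i^{1}\cap F_j^{1}\quad(i\ne j);\qquad\qquad
\text{(b)}\quad F_i^{1}\cap F_j^{0}\quad(i\in[1,d]\setminus S,\ j\ne i).
\]
By construction $\C$ is pure of dimension $d-2$, so it remains to check that $\C$ is a spanning subcomplex of the induced complex $\C(Q_d)[W]$ and that $\C$ is strongly connected.

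For the first point I would argue straight from the coordinate description of faces in \cref{rmk:cube-properties}(iii). Every vertex of a face of kind (a) has at least two coordinates equal to $1$, so it is neither $\vec 0$ nor a unit vector; every vertex of a face of kind (b) has its $i$-th coordinate equal to $1$ with $i\notin S$, so it is neither $\vec 0$ nor a removed unit vector $\vec e_m$ with $m\in S$. Hence all generating faces, and so all of $\C$, lie in $\C(Q_d)[W]$. Conversely, a vertex of $W$ with at least two coordinates equal to $1$ lies in a face of kind (a), and a vertex of $W$ with exactly one coordinate equal to $1$ is some $\vec e_i$ with $i\notin S$, whence $\vec e_i\in F_i^{1}\cap F_j^{0}$ for any $j\ne i$, a face of kind (b). Therefore $V(\C)=W=V(\C(Q_d)[W])$ and $\C$ is spanning.

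Strong connectivity is where I expect the only genuine work, though it too is brief. Here $\C$ is strongly connected exactly when the graph on the facets of $\C$, in which two facets are adjacent when their intersection is a ridge of $\C$ (a $(d-3)$-face), is connected. The elementary fact I would use, again read off from \cref{rmk:cube-properties}(iii), is this: if two $(d-2)$-faces of $Q_d$ share one of their two defining facets while their remaining defining facets carry distinct indices, then their intersection is an intersection of three facets with distinct indices, hence a $(d-3)$-face of $Q_d$; being a face of each of the two $(d-2)$-faces, it lies in $\C$ and so is a ridge of $\C$, and the two faces are adjacent. Applying this for $d\ge 3$: any two faces $F_i^{1}\cap F_j^{1}$ and $F_i^{1}\cap F_{j'}^{1}$ of kind (a) with $j\ne j'$ are adjacent, so the faces of kind (a) form a connected subgraph of the facet graph (the $2$-element subsets of $[1,d]$ being connected under the relation of sharing an element, for $d\ge 3$); and every face $F_i^{1}\cap F_j^{0}$ of kind (b) is adjacent to the face $F_i^{1}\cap F_{j'}^{1}$ of kind (a) for any $j'\notin\{i,j\}$, which exists because $d\ge 3$. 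Thus the facet graph of $\C$ is connected, so $\C$ is a strongly connected $(d-2)$-complex; the cases $d\le 2$ are trivial, $\C$ being then a $0$-complex. Together with the preceding paragraph this shows $\C$ is a spanning strongly connected $(d-2)$-subcomplex of $\C(Q_d)[W]$, as required. The only point that needs care is precisely this intersection behaviour of pairs of $(d-2)$-faces of the cube, and it is immediate from the $0-1$ description of the cube's faces.
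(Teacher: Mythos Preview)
Your proof is correct and follows essentially the same approach as the paper's: realise $Q_d$ as a $0$--$1$ polytope with $y=\vec 0$, assemble a pure $(d-2)$-complex from the ridges $F_i^{1}\cap F_j^{1}$ together with enough ridges of the form $F_i^{1}\cap F_j^{0}$ (with $i\notin S$) to pick up the surviving unit vectors, and then verify strong connectivity by checking that suitable pairs of these ridges meet in $(d-3)$-faces. The only differences are cosmetic: the paper fixes a single index for the $0$-coordinate in its type-(b) ridges (taking $R_i=F_1^{0}\cap F_i^{1}$ for $i\notin S$), whereas you allow all $j\ne i$, giving a somewhat larger but still valid complex; and the paper writes out the three ridge-to-ridge cases explicitly where you phrase the same computation as connectivity of the facet graph.
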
 

\begin{proof} Without loss of generality, assume that  $Q_{d}$  is given as a $0-1$ polytope, and for the sake of concreteness, that $y=(0,\ldots,0)$ and $Y=\{\vec e_{1},\ldots,\vec e_{k}\}$ where $\vec e_{i}$ denotes the standard unit vector with the $i$-entry equal to one.

Let $\C:=Q_{d}- (\{y\}\cup Y)$, the subcomplex of $Q_{d}$ induced by  $V(Q_{d})\setminus (\{y\}\cup Y)$. Consider the $d-k$ ridges
\[R_{i}:=\conv \{\vec x\in V(Q_{d}): x_{1}=0,x_{i}=1\}\;\text{for $i\in [k+1,d]$}\] and the ${d\choose 2}$ ridges \[R_{i,j}:=\conv  \{\vec x\in V(Q_{d}): x_{i}=1,x_{j}=1\} \text{for some $i,j\in[1,d]$ with $i\ne j$.}\] 
Let $\C':=\C(R_{k+1})\cup\cdots\cup \C(R_{d})\cup \C(R_{1,2})\cup \cdots\cup \C(R_{d-1,d})$. Then $\C'$ is a pure $(d-2)$-subcomplex of $\C$. 
 
We show that $\C'$ is a spanning subcomplex of $\C$. Let $\vec x=(x_{1},\ldots,x_{d})$ be a vertex in $V(Q_{d})\setminus (\{y\}\cup Y)$. Then either $\vec x=\vec e_{i}$ for some $i=k+1,\ldots, d$ or $x_{i}=x_{j}=1$ for some $i,j\in[1,d]$ with $i\ne j$; see \cref{rmk:cube-properties}(iii).
In the former case, the vertex $\vec x$ lies in the $(d-2)$-face $R_{i}$, and in the latter case, the vertex $\vec x$ lies in the $(d-2)$-face $R_{i,j}$.
Therefore $\vec x\in \C'$. We next show that $\C'$ is strongly connected.  

Take any two distinct ridges \(R\) and \(R'\) from $\C'$. We consider three cases based on the form of $R$ and $R'$.

Suppose that \(R=R_{i}\)  and \(R'=R_{j}\) for $i,j\in[k+1,d]$ and $i\ne j$. Then there is a \((d-2,d-3)\)-path $L$ of length one from \(R\) to \(R'\) through their common \((d-3)\)-face \(\conv \{\vec x\in V(Q_{d}):x_1=0,x_i=1,x_j=1\}\). That is, $L:=RR'$.

Next suppose that \(R=R_{i}\) and \(R'=R_{j,l}\) for $j,l\in[1,d]$ with $j\ne l$.  If \(i=j\)  there is a \((d-2,d-3)\)-path of length one from \(R\) to \(R'\) through the common \((d-3)\)-face \(\conv \{\vec x\in V(Q_{d}):x_1=0,x_i=1,x_l=1\}\). If \(i\neq j\), and consequently \(i\neq l\), then there is a \((d-2,d-3)\)-path $L$ of length two from \(R\) to \(R'\) through the \((d-2)\) face $R_{i,l}$, which shares the \((d-3)\)-face $\conv \{\vec x\in V(Q_{d}):x_{1}=0,x_{i}=1,x_{l}=1\}$ with \(R\) and the \((d-3)\)-face $\conv \{\vec x\in V(Q_{d}):x_{i}=1,x_{j}=1,x_{l}=1\}$ with \(R'\). That is, $L:=RR_{i,l}R'$.

Finally suppose that \(R=R_{i,j}\) with $i\ne j$  and \(R'=R_{l,m}\) with $l\ne m$. If \(i=l\)  there is a \((d-2,d-3)\)-path from \(R\) to \(R'\) through the common \((d-3)\)-face \(\conv \{\vec x\in V(Q_{d}):x_i=1,x_j=1,x_m=1\}\). If \(\{i,j\}\cap\{l,m\}=\emptyset\)  then there is a \((d-2,d-3)\)-path $L$ of length two from \(R\) to \(R'\) through the \((d-2)\)-face $R_{i,l}$, which shares  the \((d-3)\)-face $\conv \{\vec x\in V(Q_{d}):x_{i}=1,x_{j}=1,x_{l}=1\}$ with \(R\) and the \((d-3)\)-face $\conv \{\vec x\in V(Q_{d}):x_{i}=1,x_{l}=1,x_{m}=1\}$ with \(R'\). That is, $L:=RR_{i,l}R'$.\end{proof}

\begin{remark}\label{rmk:cubical-neighbours}
In \cref{prop:cube-cutsets}, the subcomplex of $Q_{d}$ induced by $V(Q_{d})\setminus (\{y\}\cup Y)$, in the proof of \cref{prop:cube-cutsets} denoted by $\C$, is pure if and only if $Y$ is the set of all neighbours of $y$. Let $Y=\{\vec e_{1},\ldots,\vec e_{k}\}$ and $y=(0,\ldots,0)$. If $k<d$ then the facets $\conv\{\vec x\in V(Q_{d}):x_{\ell}=1\}$ for $\ell\in[k+1,d]$ are in $\C$ and the ridge $\conv\{\vec x\in V(Q_{d}):x_{i}=1,x_{j}=1\}$ for $i,j\in[1,k]$ and $i\ne j$ is in $\C$ but the facets $\conv\{\vec x\in V(Q_{d}):x_{i}=1\}$ and $\conv\{\vec x\in V(Q_{d}):x_{j}=1\}$ are not in $\C$. Thus $\C$ is nonpure. If instead $k=d$ then no facet is in $\C$, and the vector coordinates of every vertex in $\C$ has at least two entries with ones, and thus, it is contained in some ridge $\conv\{\vec x\in V(Q_{d}):x_{i}=1,x_{j}=1\}$ for $i,j\in [1,d]$ and $i\ne j$, which is in $\C$. Thus $\C$ is a pure $(d-2)$-subcomplex of $Q_{d}$, and it coincides with the complex $\C'$. Figure \ref{fig:prop-cube-cutsets} illustrates \cref{prop:cube-cutsets}.
\end{remark}

\begin{figure}  
\includegraphics{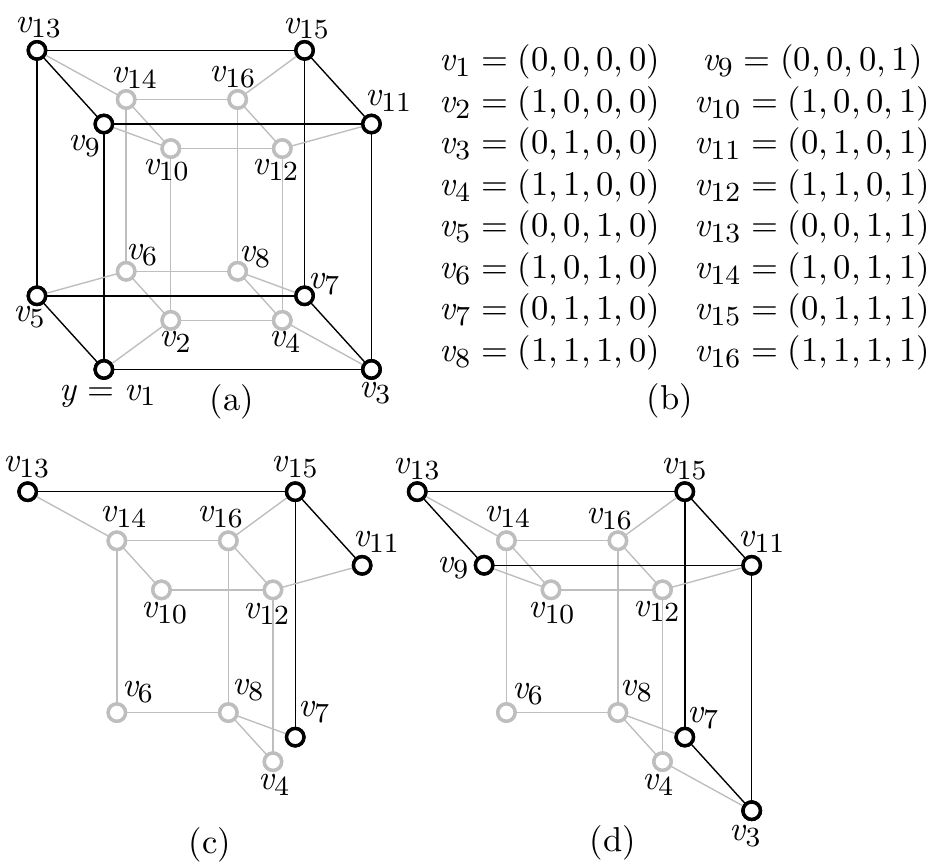}
\caption{Complexes in the 4-cube. (a) The 4-cube with the vertex $y=(0,0,0,0)$ singled out. The vertex labelling corresponds to a realisation of the 4-cube as a $0-1$ polytope. (b)  Vertex coordinates as elements of $\{0,1\}^{4}$. (c) The strongly connected 2-complex $\C$ induced by $V(Q_{4})\setminus (\{y\}\cup Y)$ where $Y=\{v_{2},v_{3},v_{5},v_{9}\}$. Every face of $\C$ is contained in a 2-face of the cube. (d) The nonpure complex $\C$ induced  by $V(Q_{4})\setminus (\{y\}\cup Y)$ where $Y=\{v_{2}=\vec e_{1},v_{5}=\vec e_{3}\}$. The 2-face $\conv \{v_{6},v_{8},v_{14},v_{16}\}=\conv\{\vec x\in V(Q_{4}):x_{1}=1,x_{3}=1\}$ of $\C$ is not contained in any 3-face, and there are two 3-faces in $\C$, namely $\conv \{v_{3},v_{4},v_{7},v_{8}, v_{11},v_{12},v_{15},v_{16}\}=\conv\{\vec x\in V(Q_{4}):x_{2}=1\}$ and $\conv \{v_{9},v_{10},v_{11},v_{12},v_{13},v_{14}, v_{15},v_{16}\}=\conv\{\vec x\in V(Q_{4}):x_{4}=1\}$.}\label{fig:prop-cube-cutsets} 
\end{figure}
   
\section{Cubical polytopes} 
\label{sec:cubical-connectivity}
The aim of this section is to prove \cref{thm:cubical-connectivity},  a result that relates the connectivity of a cubical polytope to its minimum degree. 

Two vertex-edge paths are {\it independent} if they share no inner vertex. Similarly, two facet-ridge paths are {\it independent} if they do not  share an inner facet.

Given sets $A,B$ of vertices in a graph, a path from $A$ to $B$, called an {\it $A-B$ path}, is a (vertex-edge) path $L:=u_{0}\ldots u_{n}$ in the graph such that $V(L)\cap A=\{u_{0}\}$  and $V(L)\cap B=\{u_{n}\}$. We write $a-B$ path instead of $\{a\}-B$ path, and likewise, write $A-b$ path instead of $A-\{b\}$. 

Our exploration of the connectivity of cubical polytopes starts with a statement about the connectivity of the star of a vertex. But first we need a lemma that holds for all $d$-polytopes.

\begin{lemma}\label{lem:star-minus-facet-F} Let $P$ be a $d$-polytope with $d\ge 2$. Then, for any two distinct facets $F_{1}$ and $F_{2}$ of $P$, the following hold.

\begin{enumerate}[(i)]
\item There are $d$ independent facet-ridge paths between $F_{1}$ and $F_{2}$ in $P$.
\item Let  $\St$ be  the star of a vertex and let $F$ be a facet of $\St$. If $F_{1}$ and $F_{2}$  are in $\St$ and are both different from $F$, then there exists a $(d-1,d-2)$-path between $F_{1}$ and $F_{2}$ in $\St$ that does not contain $F$. 

\item Let $F$ be a facet of $P$ other than $F_{1}$ and $F_{2}$. Then there exists a $(d-1,d-2)$-path between $F_{1}$ and $F_{2}$ in $P$ that does not contain $F$. 

\item Let $R$ be an arbitrary ridge of $P$. Then there exists a facet-ridge path $J_{1}\ldots J_{m}$ with $J_{1}=F_{1}$ and $J_{m}=F_{2}$ in $P$ such that $J_{\ell}\cap J_{\ell+1}\ne R$ for each $\ell\in[1,m-1]$. 
\end{enumerate}	
\end{lemma}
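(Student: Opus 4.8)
The plan is to pass to the dual polytope $P^{*}$ and translate every assertion about facet-ridge paths in $P$ into one about vertex-edge paths in $P^{*}$. Let $\psi$ be the face-lattice anti-isomorphism from $P$ to $P^{*}$ used in the proof of \cref{prop:st-ast-connected-complexes}: it carries facets of $P$ to vertices of $P^{*}$ and ridges of $P$ to edges of $P^{*}$, a facet of $P$ contains a ridge exactly when the corresponding vertex of $P^{*}$ lies on the corresponding edge, and $\psi$ turns meets into joins, so for facets $J,J'$ of $P$ the face $J\cap J'$ is a ridge precisely when $\psi(J)$ and $\psi(J')$ are adjacent in $P^{*}$, and in that case $J\cap J'=R$ precisely when the joining edge is $\psi(R)$. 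From this I would first extract the dictionary I need: a facet-ridge path $J_{1}\ldots J_{m}$ in $P$ corresponds bijectively to a vertex-edge path $\psi(J_{1})\ldots\psi(J_{m})$ in $P^{*}$; two such paths are independent (no common inner facet) exactly when the dual paths are internally disjoint; a path avoids a facet $F$ exactly when its dual avoids $\psi(F)$; and a path satisfies $J_{\ell}\cap J_{\ell+1}\ne R$ for all $\ell$ exactly when its dual never uses the edge $\psi(R)$. Setting up this dictionary carefully is the only real work; each of the four parts is then a short consequence of \cref{thm:Balinski} (applied to $P^{*}$, or to a facet of $P^{*}$) together with Menger's theorem.

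For part (i): since $F_{1}\ne F_{2}$, the vertices $\psi(F_{1}),\psi(F_{2})$ of $P^{*}$ are distinct, and $G(P^{*})$ is $d$-connected by \cref{thm:Balinski}, so Menger's theorem supplies $d$ internally disjoint paths between them in $P^{*}$, whose preimages under $\psi$ are $d$ independent facet-ridge paths between $F_{1}$ and $F_{2}$ in $P$. Part (iii) then follows by counting: as $F$ is distinct from $F_{1}$ and $F_{2}$, the vertex $\psi(F)$ is an inner vertex of at most one of these $d$ dual paths, so for $d\ge 2$ at least one of them avoids $\psi(F)$, and its preimage is a facet-ridge path between $F_{1}$ and $F_{2}$ in $P$ missing $F$.

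For part (ii): when $d=2$ the star of a vertex has exactly two facets, so the hypothesis that $F_{1}$ and $F_{2}$ be distinct facets of $\St$ both different from a facet $F$ of $\St$ cannot be met, and the claim is vacuous. For $d\ge 3$, the facets of $\St=\st(v,\B(P))$ correspond under $\psi$ to the vertices of the facet $\psi(v)$ of $P^{*}$, and (exactly as in the proof of \cref{prop:st-ast-connected-complexes}) facet-ridge paths in $\St$ correspond to vertex-edge paths inside the $(d-1)$-polytope $\psi(v)$. Since $d-1\ge 2$, \cref{thm:Balinski} makes $G(\psi(v))$ $(d-1)$-connected, so $G(\psi(v))-\psi(F)$ is still connected; a path in it from $\psi(F_{1})$ to $\psi(F_{2})$ pulls back to a facet-ridge path between $F_{1}$ and $F_{2}$ in $\St$ that avoids $F$.

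For part (iv): the ridge $R$ lies in exactly two facets, say $R=G_{1}\cap G_{2}$, so $\psi(R)$ is the edge of $P^{*}$ joining $\psi(G_{1})$ and $\psi(G_{2})$, and by the dictionary the claim reduces to finding a path from $\psi(F_{1})$ to $\psi(F_{2})$ in $P^{*}$ that does not use the edge $\psi(R)$. Since $d\ge 2$, $G(P^{*})$ is $2$-connected by \cref{thm:Balinski}, hence $2$-edge-connected, so Menger's theorem gives two edge-disjoint paths from $\psi(F_{1})$ to $\psi(F_{2})$ in $P^{*}$, at most one of which uses $\psi(R)$; the other one pulls back to the required facet-ridge path. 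Throughout, the main obstacle is not any individual step but the initial translation: one must verify that ``shares no inner facet'', ``avoids the facet $F$'', and ``never meets in the ridge $R$'' really do correspond to ``internally disjoint'', ``avoids the vertex $\psi(F)$'', and ``omits the edge $\psi(R)$'', which rests on a ridge lying in exactly two facets and on the behaviour of $\psi$ on intersections.
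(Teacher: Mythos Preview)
Your proof is correct and follows essentially the same duality approach as the paper: pass to $P^{*}$ via the anti-isomorphism $\psi$, translate facet-ridge paths to vertex-edge paths, and invoke Balinski's theorem together with Menger's theorem. The only minor differences are cosmetic: in part (ii) you explicitly dispose of $d=2$ as vacuous (the paper leaves this implicit), and in part (iv) you argue via $2$-edge-connectivity of $G(P^{*})$ rather than, as the paper does, reusing the $d$ internally disjoint paths from part (i) and noting that the edge $\psi(R)$ can lie on at most one of them.
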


\begin{proof} The proof of the lemma essentially follows from dualising Balinski's theorem. 

Let $\psi$ define the natural anti-isomorphism from the face lattice of $P$ to the face lattice of its dual $P^{*}$. 

(i). Any two  independent vertex-edge paths  in $P^{*}$ between the vertices $\psi(F_{1})$ and $\psi(F_{2})$ correspond to two  independent facet-ridge paths in $P$ between the facets $F_{1}$ and $F_{2}$. By Balinski's theorem there are $d$ independent $\psi(F_{1})-\psi(F_{2})$ paths in $P^{*}$, and so the assertion follows.
  
(ii). The facets in the star $\St$ of a vertex $s$ in $\B(P)$ correspond to the vertices in the facet $\psi(s)$ in $P^{*}$ corresponding to $s$. The existence of a facet-ridge path in $\St$ between any two facets $F_{1}$ and $F_{2}$ of $\St$ amounts to the existence of a vertex-edge path in $\psi(s)$ between the vertices $\psi(F_{1})$ and $\psi(F_{2})$ of $\psi(s)$. Since the  graph of the facet $\psi(s)$ is $(d-1)$-connected (Balinski's theorem), by Menger's theorem (\cite{Menger1927}; see also \cite[Sec.~3.3]{Die05}) there are $d-1$ independent paths between $\psi(F_{1})$ and $\psi(F_{2})$. Hence we can pick one such path $L^{*}$  that avoids the vertex $\psi(F)$ of $\psi(s)$. Dualising this path $L^{*}$ gives a $(d-1,d-2)$-path $L$ between $F_{1}$ and $F_{2}$ in the star $\St$ that does not contain the facet $F$ of $P$.

(iii). By (i) there are $d$ independent facet-ridge paths between $F_{1}$ and $F_{2}$ in $P$, and since $d\ge 2$, we can pick one such path that does not contain $F$.

(iv). Again by (i), there are $d$ independent facet-ridge paths between $F_{1}$ and $F_{2}$ in $P$, and since $d\ge 2$ and the ridge $R$ can be present in at most one such path, there must exist a facet-ridge path that does not contain $R$. The assertion now follows. 
\end{proof}

For a path $L:=u_{0}\ldots u_{n}$ we write $u_{i}Lu_{j}$ for $0\le i\le j\le n$  to denote the subpath $u_{i}\ldots u_{j}$.
 
\begin{proposition}\label{prop:star-minus-facet}  Let $F$ be a facet in the star $\St$  of a vertex in a cubical $d$-polytope. Then the antistar of $F$ in $\St$ is a strongly connected $(d-2)$-complex. 
\end{proposition}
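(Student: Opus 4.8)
The plan is to realise $\a-st(F,\St)$ as a union of antistars of cubes, one for each facet of the polytope through the relevant vertex, and then to stitch these pieces together using \cref{lem:cube-face-complex}. Write $v$ for the vertex with $\St=\st(v,\B(P))$, so that $v\in F$ and the facets of $\St$ are precisely the facets of $P$ containing $v$. For any such facet $G\neq F$, the face $G\cap F$ is a nonempty (it contains $v$) proper face of the $(d-1)$-cube $G$, and for $K\subseteq G$ one has $K\cap F=K\cap(G\cap F)$; hence, by \cref{lem:cube-face-complex} applied to $G$, the subcomplex $\a-st(G\cap F,G)$ consisting of the faces of $G$ disjoint from $F$ is a strongly connected $(d-2)$-complex. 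Since every face of $\St$ lies in some facet of $P$ through $v$, a routine check gives $\a-st(F,\St)=\bigcup_{G\neq F}\a-st(G\cap F,G)$, the facet $F$ itself contributing only the empty face. Each summand is a pure $(d-2)$-complex, so $\a-st(F,\St)$ is a pure $(d-2)$-complex; it remains to verify strong connectivity.

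For strong connectivity, take two facets $R,R'$ of $\a-st(F,\St)$ --- that is, two $(d-2)$-faces of $P$ disjoint from $F$ --- and pick facets $G,G'$ of $P$ through $v$ with $R\subseteq G$ and $R'\subseteq G'$; necessarily $G,G'\neq F$. By \cref{lem:star-minus-facet-F}(ii) there is a facet-ridge path $G=H_{0},H_{1},\dots,H_{m}=G'$ in $\St$ that avoids $F$. For each $i$ put $S_{i}:=H_{i}\cap H_{i+1}$, a $(d-2)$-face of $P$; being a ridge of $P$ it lies in exactly two facets of $P$, namely $H_{i}$ and $H_{i+1}$, and since $F$ is a third, distinct facet we get $S_{i}\not\subseteq F$. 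Thus $S_{i}$ is a $(d-2)$-cube with $S_{i}\cap F$ a proper face, and \cref{lem:cube-face-complex} applied to $S_{i}$ yields a facet $T_{i}$ of $\a-st(S_{i}\cap F,S_{i})$: a $(d-3)$-face of $P$ contained in $H_{i}\cap H_{i+1}$ and disjoint from $F$. So $T_{i}$ is a ridge of $\a-st(F,\St)$ lying in both of the neighbouring summands $\a-st(H_{i}\cap F,H_{i})$ and $\a-st(H_{i+1}\cap F,H_{i+1})$.

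Finally I would assemble the required facet-ridge path in $\a-st(F,\St)$ by concatenation. Each $\a-st(H_{i}\cap F,H_{i})$ is a strongly connected $(d-2)$-subcomplex of $\a-st(F,\St)$ whose facet-ridge paths remain facet-ridge paths in the ambient complex, so inside it I can route from a facet containing $T_{i-1}$ (from $R$ itself when $i=0$) to a facet containing $T_{i}$ (to $R'$ when $i=m$). To pass from $\a-st(H_{i}\cap F,H_{i})$ to $\a-st(H_{i+1}\cap F,H_{i+1})$ I insert one facet-ridge step between a facet $A\subseteq H_{i}$ and a facet $A'\subseteq H_{i+1}$, both containing $T_{i}$ and disjoint from $F$: then $A\cap A'\subseteq H_{i}\cap H_{i+1}=S_{i}$, and since $A\cap A'$ is disjoint from $F$ whereas $v\in S_{i}\cap F$, the intersection $A\cap A'$ cannot equal $S_{i}$ and so must be the $(d-3)$-face $T_{i}$, which is a ridge of $\a-st(F,\St)$. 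Concatenating these subpaths and crossing steps gives a facet-ridge path from $R$ to $R'$. I expect the main obstacle to be precisely this crossing argument --- making sure the bridging face between consecutive summands is a genuine ridge of the global complex --- and it is here that the choice of an $F$-avoiding path in $\St$ is essential: if some $H_{i}$ were $F$, the corresponding $S_{i}$ would lie in $F$, $\a-st(S_{i}\cap F,S_{i})$ would be empty, and there would be no bridging ridge available.
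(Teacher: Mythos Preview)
Your proof is correct and follows essentially the same approach as the paper: decompose $\a-st(F,\St)$ as $\bigcup_{G\neq F}\a-st(G\cap F,G)$ via \cref{lem:cube-face-complex}, take an $F$-avoiding facet-ridge path in $\St$ using \cref{lem:star-minus-facet-F}(ii), and bridge consecutive summands through a $(d-3)$-face of the shared ridge that is disjoint from $F$. Your crossing argument showing $A\cap A'=T_{i}$ is in fact more explicit than the paper's, which simply asserts the concatenation works once a shared $(d-3)$-face is found; the paper also obtains the bridging $(d-3)$-face directly from the fact that $S_i\cap F$ is a proper face of the $(d-2)$-cube $S_i$, rather than by invoking \cref{lem:cube-face-complex} on $S_i$.
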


\begin{proof}  Let $s$ be a vertex of a facet $F$ in a  cubical $d$-polytope $P$ and let $F_{1},\ldots,F_{n}$ be the facets in the star $\St$ of the vertex $s$. Let $F_{1}=F$. The result is true for $d=2$: the antistar of $F$ is just a vertex, a strongly connected 0-complex. So assume $d\ge 3$.

According to \cref{lem:cube-face-complex}, the antistar  of $F_{i}\cap F_{1}$ in $F_{i}$, the subcomplex of $F_{i}$ induced by $V(F_{i})\setminus V(F_{i}\cap F_{1})$, is a strongly connected $(d-2)$-complex for each $i\in [2,n]$. Since  \[\a-st(F_{1},\St)=\bigcup_{i=2}^{n} \a-st(F_{i}\cap F_{1}, F_{i}),\] it follows  that $\a-st(F_{1},\St)$ is a pure $(d-2)$-complex. It remains to prove that there exists a $(d-2,d-3)$-path $L$ between any two ridges $R_{i}$ and $R_{j}$ in $\a-st(F_{1},\St)$.   

By virtue of \cref{lem:cube-face-complex}, we can assume that $R_{i}\in \a-st(F_{i}\cap F_{1}, F_{i})$ and $R_{j}\in \a-st(F_{j}\cap F_{1}, F_{j})$ for  $i\ne j$ and $i,j\in [2,n]$. Since $\St$ is a strongly connected $(d-1)$-complex (\cref{prop:st-ast-connected-complexes}), there exists a $(d-1,d-2)$-path $M:=J_{1}\ldots J_{m}$ in  $\St$, where $J_{\ell}\cap J_{\ell+1}$ is a ridge for $\ell\in [1,m-1]$, $J_{1}=F_{i}$ and $J_{m}=F_{j}$. Let $E_{0}:=R_{i}$ and $E_{m}:=R_{j}$.

We can assume the path $M$ doesn't contain $F_{1}$ (\cref{lem:star-minus-facet-F}(ii)). Let us show that the path \(L\) exists, by proving the following statement by induction.

\begin{claim}
If \(\ell\leq m\),
there exists a \((d-2,d-3)\) path in \(\bigcup_{i=1}^\ell \a-st(J_i\cap F_1,J_i)\) between \(E_0\in \a-st(J_1\cap F_1,J_1)\) and any ridge \(E_\ell\in \a-st(J_\ell\cap F_1,J_\ell)\).
\end{claim}

\begin{claimproof}
The statement is true for \(\ell=1\). The complex  \(\a-st(J_1\cap F_1,J_1)\) is a strongly connected $(d-2)$-complex and contains \(E_{0}\). So an induction on $\ell$ can start.

Suppose that the statement is true for some \(\ell< m\). We show the existence of a $(d-2,d-3)$-path between $E_{0}$ and any ridge of \(\a-st(J_{\ell+1}\cap F_1,J_{\ell+1})\). 

 Let $E_{\ell}$ be a ridge in $\a-st(J_{\ell}\cap F_{1}, J_{\ell})$ such that $E_{\ell}$ contains a $(d-3)$-face $I_{\ell}$ of $\a-st(J_{\ell}\cap F_{1}, J_{\ell})\cap \a-st(J_{\ell+1}\cap F_{1}, J_{\ell+1})$. By the induction hypothesis there exists a \((d-2,d-3)\) path \(L_\ell\) in \(\bigcup_{i=1}^{\ell} \a-st(J_i\cap F_1,J_i)\) between \(E_{0}\) and \(E_{\ell}\).

Consider a ridge \(E'_{\ell+1}\in \a-st(J_{\ell+1}\cap F_1,J_{\ell+1})\) such that $E'_{\ell+1}$ contains the aforementioned \((d-3)\)-face \(I_{\ell}\). There is a \((d-2,d-3)\)-path \(L'_{\ell+1}\) in \(\a-st(J_{\ell+1}\cap F_1,J_{\ell+1})\) from \(E'_{\ell+1}\) to any ridge \(E_{\ell+1}\) in \(\a-st(J_{\ell+1}\cap F_1,J_{\ell+1})\), thanks to \(\a-st(J_{\ell+1}\cap F_1,J_{\ell+1})\) being a strongly connected $(d-2)$-complex.

Since $E_{\ell}$ and $E'_{\ell+1}$ share $I_{\ell}$, a path $L_{\ell+1}$ from $E_{0}$ to the arbitrary ridge $E_{\ell+1}$ is obtained as $L_{\ell+1}=E_{0}L_{\ell}E_{\ell}E'_{\ell+1}L'_{\ell+1}E_{\ell+1}$.

For this concatenation to work it remains to prove that the complex $\a-st(J_{\ell}\cap F_{1},J_{\ell})\cap \a-st(J_{\ell+1}\cap F_{1},J_{\ell+1})$ contains the aforementioned $(d-3)$-face $I_{\ell}$. Let $K_{\ell}:=J_{\ell}\cap J_{\ell+1}\cap F_{1}$. Because  $J_{\ell}\cap J_{\ell+1}$ is a ridge but not of $F_{1}$ and because $\{s\}\subseteq V(J_{\ell})\cap V(J_{\ell+1})\cap V(F_{1})$, we find that $0\le \dim K_{\ell}\le d-3$. From $J_{\ell}\cap J_{\ell+1}$ being a $(d-2)$-cube and $\dim K_{\ell}\le d-3$  follows the existence of a $(d-3)$-face in $ J_{\ell}\cap J_{\ell+1}$ that is disjoint from $F_{1}$, our $I_{\ell}$. As a consequence, this face $I_{\ell}\in \a-st(J_{\ell}\cap F_{1},J_{\ell})\cap \a-st(J_{\ell+1}\cap F_{1},J_{\ell+1})$, as desired.   
\end{claimproof} 

Applying the claim to \(\ell=m\) gives the existence of a path in \(\cup_{i=1}^m \a-st(J_i\cap F_1,J_i)\) between $E_{0}=R_{i}$ and $E_{m}=R_{j}$ ; this is the desired path $L$.\end{proof}
 
The proof method used in \cref{prop:star-minus-facet}  also proves the following.

\begin{theorem}\label{thm:polytope-minus-facet}  Let $F$ be a proper face of a  cubical $d$-polytope $P$. Then the antistar of $F$ in $P$ contains a spanning strongly connected $(d-2)$-subcomplex. 
\end{theorem}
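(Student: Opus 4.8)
The plan is to follow the proof of \cref{prop:star-minus-facet} almost verbatim, with the boundary complex $\B(P)$ playing the role of the star $\St$, and with two extra points to watch. I would first dispose of $d=2$ (the $0$-subcomplex induced on $V(P)\setminus V(F)$ does the job) and assume $d\ge 3$. Write $F_1,\dots,F_n$ for the facets of $P$. The starting observation is the decomposition
\[
\a-st(F,P)=\bigcup_{i=1}^{n}\a-st(F_i\cap F,F_i),
\]
valid because a proper face of $P$ is disjoint from $F$ exactly when it is a face of some facet $F_i$ disjoint from $F_i\cap F$. For each facet $F_i\ne F$ put $\C_i:=\a-st(F_i\cap F,F_i)$ if $F_i\cap F\ne\emptyset$, and $\C_i:=\B(F_i)$ if $F_i\cap F=\emptyset$. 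As $F_i$ is a $(d-1)$-cube, \cref{lem:cube-face-complex} (applied to $F_i$ and its proper face $F_i\cap F$) and \cref{prop:st-ast-connected-complexes} (applied to $F_i$) show that in either case $\C_i$ is a strongly connected $(d-2)$-complex with $V(\C_i)=V(F_i)\setminus V(F)$. Hence $\C:=\bigcup_{F_i\ne F}\C_i$ is a pure $(d-2)$-subcomplex of $\a-st(F,P)$ whose vertex set is $V(P)\setminus V(F)$; that is, a spanning subcomplex. Everything then reduces to showing that $\C$ is strongly connected.

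To that end I would take two facets $K\in\C_i$ and $K'\in\C_j$ of $\C$ (so $K,K'$ are $(d-2)$-faces, and we may assume $F_i,F_j$ distinct and both $\ne F$), choose a facet-ridge path $J_1\cdots J_m$ in $\B(P)$ with $J_1=F_i$, $J_m=F_j$ along which all of the ridges $R_\ell:=J_\ell\cap J_{\ell+1}$ satisfy $R_\ell\not\subseteq F$, and then replay the induction of \cref{prop:star-minus-facet}. The point is that when $R_\ell\not\subseteq F$, the proper face $R_\ell\cap F$ of the $(d-2)$-cube $R_\ell$ is disjoint from some facet $I_\ell$ of $R_\ell$; this $(d-3)$-face $I_\ell$ is then disjoint from $F$, hence from $F_\ell\cap F$ and from $F_{\ell+1}\cap F$, and being a proper face of both $J_\ell$ and $J_{\ell+1}$ it lies in $\C_\ell\cap\C_{\ell+1}$. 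Using that each $\C_\ell$ is a strongly connected $(d-2)$-complex, one routes inside $\C_\ell$ to a $(d-2)$-face containing $I_\ell$, steps across $I_\ell$ into a $(d-2)$-face of $\C_{\ell+1}$, and continues; an induction on $\ell$ produces a facet-ridge path from $K$ to $K'$ lying in $\bigcup_{t\le m}\C_t\subseteq\C$, exactly as in \cref{prop:star-minus-facet}.

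What remains, and what I expect to be the only genuinely new work, is securing a facet-ridge path in $\B(P)$ with $R_\ell\not\subseteq F$ throughout. This splits on $\dim F$: if $F$ is a facet, \cref{lem:star-minus-facet-F}(iii) gives a facet-ridge path from $F_i$ to $F_j$ avoiding $F$, and then no $R_\ell$ can lie in $F$ since a ridge of $P$ is contained in precisely two facets; if $F$ is a ridge, \cref{lem:star-minus-facet-F}(iv) with $R=F$ supplies a facet-ridge path none of whose ridges equals $F$, and $R_\ell\subseteq F$ would force $R_\ell=F$ for dimension reasons; and if $\dim F\le d-3$, any facet-ridge path of the strongly connected complex $\B(P)$ works, because then $\dim(R_\ell\cap F)\le\dim F<d-2$. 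The main obstacle is thus this case analysis on $\dim F$ together with the verification that $R_\ell\cap F$ is always a proper face of the cube $R_\ell$, so that the transition $(d-3)$-faces $I_\ell$ exist; handling the facets $F_i$ with $F_i\cap F=\emptyset$ by using $\B(F_i)$ in place of the (now $(d-1)$-dimensional) antistar is a minor bookkeeping matter.
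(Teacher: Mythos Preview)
Your proposal is correct and follows essentially the same approach as the paper: define $\C_i=\B(F_i)-V(F)$ for each facet $F_i$, use \cref{lem:cube-face-complex} and \cref{prop:st-ast-connected-complexes} to see each nonempty $\C_i$ is a strongly connected $(d-2)$-complex, choose a facet-ridge path from $F_i$ to $F_j$ via \cref{lem:star-minus-facet-F}(iii)--(iv) so that every ridge $R_\ell$ along it has $\dim(R_\ell\cap F)\le d-3$, and then use the cube structure of $R_\ell$ to find a transition $(d-3)$-face $I_\ell\in\C_\ell\cap\C_{\ell+1}$ and induct. One small caveat: your displayed decomposition $\a-st(F,P)=\bigcup_i\a-st(F_i\cap F,F_i)$ is not literally an equality (a facet $F_i$ disjoint from $F$ belongs to the left side but not to the right, since the right side lives in $\B(F_i)$), but this is harmless because you only need $\C$ to be a spanning $(d-2)$-subcomplex, which you verify directly.
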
  

\begin{proof} Let $F_{1},\ldots,F_{n}$ be the facets of $P$ and let $F$ be a proper face of $P$. The result is true for $d=2$: the antistar of $F$ is a strongly connected 1-complex, and thus, contains a spanning 0-complex. So assume $d\ge 3$. 

Let \[\C_{r}:=\B(F_{r})-V(F).\]
If $F_{r}=F$ then $\C_{r}=\emptyset$, and if $F_{r}\cap F=\emptyset$ then $\C_{r}$ is the boundary complex of $F_{r}$, a strongly connected $(d-2)$-subcomplex of $F_{r}$ (\cref{prop:st-ast-connected-complexes}). Otherwise, $\C_{r}$ is the antistar  of $F_{r}\cap F$ in $F_{r}$, also a  strongly connected $(d-2)$-subcomplex of $F_{r}$ (\cref{lem:cube-face-complex}). 

Let \[\C:=\bigcup_{r=1}^{n} \C_{r}.\]  We show that $\C$ is the required spanning strongly connected $(d-2)$-subcomplex of $P-V(F)$, the antistar of $F$ in $P$. It follows that $\C$ is a spanning pure $(d-2)$-subcomplex of $P-V(F)$. It remains to prove that there exists a $(d-2,d-3)$-path $L$ in $\C$ between any two ridges $R_{i}$ and $R_{j}$ of $\C$ with $i\neq j$.   
 
If $R_{i},R_{j}\in \C_{r}$ for some $r\in [1,n]$, then, since $\C_{r}$ is a strongly connected $(d-2)$-complex (\cref{lem:cube-face-complex}), there exists a $(d-2,d-3)$-path in $\C_{r}$ between the two ridges $R_{i}$ and $R_{j}$. Therefore, we can assume that $R_{i}$ is in $\C_{i}$ and $R_{j}$ is in $\C_{j}$  for $i\ne j$. Observe that $F_{i}\ne F$ and $F_{j}\ne F$.  Hereafter we let $E_{0}:=R_{i}$ and $E_{m}:=R_{j}$.

Since $\B(P)$ is a strongly connected $(d-1)$-subcomplex of $P$, there exists a $(d-1,d-2)$-path $M:=J_{1}\ldots J_{m}$ in $P$ where  $J_{\ell}\cap J_{\ell+1}$ is a ridge for $\ell\in [1,m-1]$, $J_{1}=F_{i}$ and $J_{m}=F_{j}$. Each facet $J_{r}$ coincides with a facet $F_{i_{r}}$ for some $i_{r}\in[1,n]$; we henceforth  let $\D_{r}:=\C_{i_{r}}$.

By \cref{lem:star-minus-facet-F}(iii)-(iv) we can assume  that $J_{r}\ne F$ for $r\in[1,m]$ in the case of $F$ being a facet and that $J_{\ell}\cap J_{\ell+1}\ne F$ for $\ell\in[1,m-1]$ in the case of $F$ being a ridge. As a consequence, $\dim (J_{\ell}\cap J_{\ell+1}\cap F)\le d-3$; this in turn implies that, for each $\ell\in[1,m-1]$, $J_{\ell}\cap J_{\ell+1}$ contains a $(d-3)$-face $I_{\ell}$ that is disjoint from $F$. Hence $I_{\ell}\in \D_{\ell}\cap\D_{\ell+1}$ for each $\ell\in[1,m-1]$.
 
As in the proof or Proposition~\ref{prop:star-minus-facet}, we show that the path $L$ exists by proving the following claim by induction.

\begin{claim}
If \(\ell\leq m\),
there exists a \((d-2,d-3)\) path in \(\bigcup_{i=1}^\ell \D_{i}\) between \(E_0\in \D_{1}\) and any ridge $E_{\ell}\in \D_{\ell}\).
\end{claim}
\begin{claimproof}
The statement is true for \(\ell=1\). The complex  \(\D_{1}\) is a strongly connected $(d-2)$-complex and \(E_0\in \D_{1}\).

Suppose that the statement is true for some \(\ell< m\). We show the existence of a $(d-2,d-3)$-path between $E_{0}\in \D_{1}$ and any ridge of $\D_{\ell+1}$. 

 Let \(E_{\ell}\) be a ridge in \(\D_{\ell}\) containing a \((d-3)\) face \(I_{\ell}\) of \(\D_{\ell} \cap \D_{\ell+1}\); this $(d-3)$-face $I_{\ell}$ exists by our previous discussion. By the induction hypothesis, there exists a \((d-2,d-3)\) path \(L_\ell\) in \(\bigcup_{i=1}^{\ell} \D_i\) between \(E_0\) and the ridge \(E_{\ell}\).

Consider a ridge \(E'_{\ell+1}\in \D_{\ell+1}\) containing the face \(I_{\ell}\). There is a $(d-2,d-3)$-path $L'_{\ell+1}$ in \(\D_{\ell+1}\) from \(E'_{\ell+1}\) to any ridge \(E_{\ell+1}\in \D_{\ell+1}\), thanks to $\D_{\ell+1}$ being a strongly connected $(d-2)$-complex. 

The desired path $L_{\ell+1}$ between $E_{0}$ and the arbitrary ridge $E_{\ell+1}$ is obtained as $L_{\ell+1}=E_{0}L_{\ell}E_{\ell}E'_{\ell+1}L_{\ell+1}E_{\ell+1}$.
\end{claimproof}

The claim for \(\ell=m\) gives the desired \((d-2,d-3)\)-path $L$ in $\cup_{i=1}^{m}\D_{i}\subset \C$ between $E_{0}=R_{i}$ and $E_{m}=R_{j}$, which concludes the proof.
\end{proof}

\begin{remark}\label{rmk:Antistar-Cubical-nonpure}  \cref{thm:polytope-minus-facet} is best possible in the sense that the antistar of a face does not always contain a spanning strongly connected $(d-1)$-subcomplex. The removal of the vertices of the face $F$ in \cref{fig:cubical-3-polytopes} leaves a pure $(d-1)$-subcomplex that is not strongly connected.
\end{remark}

The ideas presented in \cref{prop:star-minus-facet,thm:polytope-minus-facet} play a key role in the proof of the main result of \cite{ThiPinUgo18}. 

Before proving the main result of the section, we state a useful corollary that follows from \cref{prop:connected-complex-connectivity,thm:polytope-minus-facet}.

\begin{corollary}\label{cor:Removing-facet-(d-2)-connectivity}  Let $P$ be a cubical $d$-polytope and let $F$ be a proper face of $P$. Then the subgraph  $G(P)-V(F)$ is $(d-2)$-connected.
\end{corollary}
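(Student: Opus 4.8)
The plan is to combine the two results the paper has already set up. By \cref{thm:polytope-minus-facet}, the antistar of the proper face $F$ in the cubical $d$-polytope $P$—that is, the subcomplex $\B(P)-V(F)$—contains a spanning strongly connected $(d-2)$-subcomplex, call it $\C$. Since $\C$ is a strongly connected $(d-2)$-complex, \cref{prop:connected-complex-connectivity} tells us that its graph $G(\C)$ is $(d-2)$-connected.

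The remaining point is to pass from the connectivity of $G(\C)$ to the connectivity of $G(P)-V(F)$. This is immediate once we observe that $\C$ is a \emph{spanning} subcomplex of $\B(P)-V(F)$: its vertex set is exactly $V(P)\setminus V(F)$, which is the vertex set of $G(P)-V(F)$. Hence $G(\C)$ is a spanning subgraph of $G(P)-V(F)$ on the same vertex set, and adding edges cannot destroy $k$-connectivity. Therefore $G(P)-V(F)$, having at least as many edges as the $(d-2)$-connected graph $G(\C)$ and the same vertices, is itself $(d-2)$-connected.

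I would also record the trivial boundary case $d=2$ separately if needed (here $G(P)-V(F)$ is $0$-connected, i.e.\ nonempty, which matches the claim), though \cref{thm:polytope-minus-facet} already handles $d=2$ in its statement, so no special argument is required. I do not anticipate a genuine obstacle here: the real work has been done in \cref{thm:polytope-minus-facet}, and this corollary is just the translation of ``strongly connected spanning $(d-2)$-subcomplex'' into ``$(d-2)$-connected spanning subgraph'' via Sallee's proposition (\cref{prop:connected-complex-connectivity}) together with the elementary monotonicity of connectivity under adding edges. The only thing to be careful about is to state explicitly that $\C$ spans all of $V(P)\setminus V(F)$, so that no vertex of $G(P)-V(F)$ is left out of $G(\C)$.
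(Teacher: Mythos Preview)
Your proposal is correct and matches the paper's own approach exactly: the paper states that the corollary follows from \cref{prop:connected-complex-connectivity,thm:polytope-minus-facet}, and you have spelled out precisely how, including the spanning-subgraph observation needed to transfer $(d-2)$-connectivity from $G(\C)$ to $G(P)-V(F)$.
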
 
 
For $d\ge 4$ we define the two functions $f(d)$ and $g(d)$ that we mentioned in the introduction.

\begin{enumerate}
\item 	The function $f(d)$ gives the maximum number such that every cubical $d$-polytope with minimum degree $\delta\le f(d)$ is $\delta$-connected. 
\item the function $g(d)$ gives the maximum number such that every minimum separator with cardinality at most $g(d)$ of every cubical $d$-polytope consists of the neighbourhood of some vertex.
\end{enumerate}

The functions $f(3)$ and $g(3)$ are not defined. No cubical 3-polytope has minimum degree $\delta\ge 4$, and so for every positive integer $\delta_{0}\ge 3$ it follows that every cubical 3-polytope with minimum degree $\delta\le \delta_{0}$ is $\delta$-connected. \Cref{fig:cubical-3-polytopes} shows cubical 3-polytopes with minimum separators that are not the neighbourhood of a vertex.  

The function $f(d)$ is well defined for $d\ge 4$. There is a cubical $d$-polytope with minimum degree $\delta$ for every $\delta\ge d\ge 4$, for instance, a neighbourly cubical $d$-polytope \cite{JosZie00}. Every $d$-polytope is $d$-connected by Balinski's theorem. Furthermore, there exists a cubical $d$-polytope with minimum degree $\delta>2^{d-1}$ that is not $\delta$-connected: the connected sum of two copies of a neighbourly cubical $d$-polytope with minimum degree $\delta$.  Thus $d\le f(d)\le 2^{d-1}$.  

At this moment, we don't claim that $g(d)$ exists; this will become evident in the proof of \cref{thm:cubical-connectivity}.

 \begin{proposition}\label{prop:g+1} Let $P$ be a cubical $d$-polytope with $d\ge 4$. If the function $g(d)$ exists and $P$ has minimum degree at least $g(d)+1$, then $G(P)$ is $(g(d)+1)$-connected.
\end{proposition}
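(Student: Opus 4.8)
The plan is a short proof by contradiction that simply unwinds the definition of $g(d)$. Suppose, for the sake of contradiction, that $G(P)$ is not $(g(d)+1)$-connected, so that its connectivity satisfies $\kappa(G(P))\le g(d)$. Since $P$ has minimum degree at least $g(d)+1$, we get $\kappa(G(P))\le g(d)<g(d)+1\le\delta$; in particular $\kappa(G(P))<\delta$, so $G(P)$ is not a complete graph and therefore admits a separator. I would then let $X$ be a separator of minimum cardinality, so that $|X|=\kappa(G(P))\le g(d)$; by Balinski's theorem (\cref{thm:Balinski}) we also have $|X|\ge d$. Thus $X$ is a minimum separator of $P$ whose cardinality lies between $d$ and $g(d)$.

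Next I would invoke the definition of $g(d)$ directly: since $X$ is a minimum separator of the cubical $d$-polytope $P$ with $|X|\le g(d)$, the set $X$ must be the neighbourhood $N(v)$ of some vertex $v$ of $P$. Hence $\deg_{G(P)}(v)=|N(v)|=|X|\le g(d)$. But $P$ has minimum degree at least $g(d)+1$, so $\deg_{G(P)}(v)\ge g(d)+1>g(d)$, a contradiction. Therefore no separator of cardinality at most $g(d)$ exists, and $G(P)$ is $(g(d)+1)$-connected.

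There is no real obstacle here: the statement is essentially immediate once the definition of $g(d)$ is available. The only points needing a line of care are (a) verifying that $G(P)$ actually has a separator, which follows from $\kappa(G(P))\le g(d)<\delta$, and (b) checking that the minimum separator $X$ has cardinality in the range to which the definition of $g(d)$ applies, which follows by combining the lower bound $|X|\ge d$ from Balinski's theorem with the assumed upper bound $|X|\le g(d)$.
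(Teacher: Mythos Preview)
Your proof is correct and follows essentially the same route as the paper: assume a minimum separator $X$ of size at most $g(d)$ exists, invoke the definition of $g(d)$ to conclude $X$ is the neighbourhood of some vertex, and contradict the minimum-degree hypothesis. The paper's version is just terser, omitting your extra care about $G(P)$ not being complete and the (harmless but unnecessary) Balinski lower bound on $|X|$.
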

	
\begin{proof} Suppose that $G(P)$ is not $(g(d)+1)$-connected. Then there is a minimum separator $X$ with cardinality at most $g(d)$. By the definition of $g(d)$, $X$ consists of all the neighbours of some vertex $u$. This contradicts the degree of $u$, which is at least $g(d)+1>|X|$. 
\end{proof}

\begin{corollary}\label{cor:f-g} If the function $g(d)$ exists for $d\ge 4$, then $f(d)>g(d)$.
\end{corollary}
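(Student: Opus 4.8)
The plan is to verify that $g(d)+1$ is an admissible value in the definition of $f(d)$; since $f(d)$ is the \emph{largest} such value, this will immediately give $f(d)\ge g(d)+1>g(d)$. So the task reduces to proving the following statement: every cubical $d$-polytope $P$ with minimum degree $\delta\le g(d)+1$ is $\delta$-connected.

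To establish this I would argue by contradiction. Suppose $G(P)$ is not $\delta$-connected. Then it has a minimum separator $X$ with $|X|<\delta$, and hence $|X|\le\delta-1\le g(d)$. Since $X$ is a minimum separator of cardinality at most $g(d)$ in a cubical $d$-polytope, the definition of $g(d)$ forces $X$ to consist of all the neighbours of some vertex $u$ of $P$. But then $|X|=\deg u\ge\delta$, because $\delta$ is the minimum degree of $P$, which contradicts $|X|<\delta$. Hence $G(P)$ is $\delta$-connected, and since $P$ was an arbitrary cubical $d$-polytope of minimum degree at most $g(d)+1$, we conclude $f(d)\ge g(d)+1$, that is, $f(d)>g(d)$.

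I expect no genuine obstacle here: once $g(d)$ is known to exist, this is a short bookkeeping argument from its definition (indeed, the borderline case $\delta=g(d)+1$ is precisely \cref{prop:g+1}, so one could alternatively cite that proposition and treat only the easier case $\delta\le g(d)$ by the contradiction above). The one point meriting a moment's care is the inequality chain $|X|<\delta\le g(d)+1\Rightarrow|X|\le g(d)$, which is exactly what makes the defining property of $g(d)$ applicable to the separator $X$.
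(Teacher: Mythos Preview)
Your argument is correct and is essentially the paper's approach: the paper states the corollary immediately after \cref{prop:g+1} without further proof, so its implicit argument is exactly the contradiction you spell out (your observation that the case $\delta=g(d)+1$ is literally \cref{prop:g+1}, with the case $\delta\le g(d)$ being the same contradiction in easier form, is precisely how the corollary is meant to follow).
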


\begin{theorem}[Connectivity Theorem]\label{thm:cubical-connectivity} A cubical $d$-polytope $P$ with minimum degree $\delta$ is $\min\{\delta,2d-2\}$-connected for every $d\ge 3$. 

Furthermore, for any $d\ge 4$, every minimum separator $X$ of cardinality at most $2d-3$ consists of  all the neighbours of some vertex, and the subgraph $G(P)-X$ contains  exactly two components, with one of them being the vertex itself.  \end{theorem}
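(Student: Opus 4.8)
\emph{Reduction of the connectivity statement.}
Let $\kappa$ denote the connectivity of $G(P)$. For $d=3$, Euler's formula forces $\delta=3$, and \cref{thm:Balinski} gives $\kappa=3=\min\{\delta,2d-2\}$, so the first assertion holds; hence assume $d\ge4$. Since $\kappa\le\delta$ always (delete the neighbours of a vertex of degree $\delta$), it suffices to prove $\kappa\ge\min\{\delta,2d-2\}$. If this fails, a minimum separator $X$ satisfies $|X|=\kappa\le 2d-3$ and $|X|<\delta$; granting the structural assertion, $X=N(u)$ for some vertex $u$, whence $\kappa=|X|=\deg u\ge\delta$, a contradiction. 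Combining $\kappa\ge\min\{\delta,2d-2\}$ with $\kappa\le\delta$ gives the first assertion. So everything reduces to proving: \emph{for $d\ge4$, every minimum separator $X$ with $|X|\le 2d-3$ equals $N(u)$ for some vertex $u$, and $G(P)-X$ has exactly two components, one of them $\{u\}$.}

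\emph{Setting up the structural proof.}
Fix such an $X$, with components $C_1,\dots,C_m$, $m\ge2$; by \cref{thm:Balinski}, $d\le|X|\le 2d-3$, and the standard minimality argument shows every vertex of $X$ has a neighbour in every $C_i$. Call a facet $F$ \emph{tame} if $|V(F)\cap X|\le d-2$ and \emph{wild} otherwise; since $G(F)$ is the graph of a $(d-1)$-cube, hence $(d-1)$-connected, a tame facet has $V(F)\setminus X$ inside a single component. First I would record that, for $d\ge4$, $X$ cannot contain all vertices of a ridge $R$: for $d\ge5$ because $|V(R)|=2^{d-2}>2d-3$, and for $d=4$ from \cref{cor:Removing-facet-(d-2)-connectivity}, since then $G(P)-V(R)$ is $2$-connected while $X\setminus V(R)$ has at most one vertex, so $G(P)-X$ would be connected. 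Consequently every ridge has a vertex outside $X$; feeding this into the (strongly connected, by \cref{prop:st-ast-connected-complexes}) dual graph of $P$ shows that if every facet met only one component then that component would be the same for all facets, forcing $m=1$ — so \emph{some} facet $F^{*}$ meets at least two components. Then $G(F^{*})-X$ is disconnected, so $|V(F^{*})\cap X|\ge d-1$, and when $|V(F^{*})\cap X|=d-1$, \cref{prop:known-cube-cutsets} identifies $V(F^{*})\cap X=N_{F^{*}}(w^{*})$ and shows $F^{*}$ meets exactly two components, one meeting $F^{*}$ only in $\{w^{*}\}$.

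\emph{Finding the vertex; the main obstacle.}
The crux is to promote such a $w^{*}$ to a vertex $u$ with $N(u)\subseteq X$, after which minimality of $X$ forces $X=N(u)$ and makes $\{u\}$ a component. Here one argues along facet--ridge paths much as in \cref{prop:star-minus-facet,thm:polytope-minus-facet}: if $w^{*}$ still has a neighbour outside $X$, it lies in the component $C$ containing $w^{*}$, and every facet through $w^{*}$ is again wild of the special type with $w^{*}$ as its distinguished vertex; tracking these facets and bounding how much of $X$ they can jointly consume against $|X|\le 2d-3$ forces the neighbours of $w^{*}$ within them to already exhaust $N(w^{*})$, so $N(w^{*})\subseteq X$ and we may take $u=w^{*}$. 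I expect this bookkeeping to be the main difficulty — in particular handling nonsimple vertices, facets meeting $X$ in $d$ or more vertices, and several interacting wild facets — and the examples of \cref{fig:cubical-3-polytopes} show the argument must genuinely use $d\ge4$ and must fail at exactly one more vertex, i.e.\ at cardinality $2d-2$.

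\emph{Exactly two components.}
With $X=N(u)$, it remains to show $G(P)-(\{u\}\cup N(u))$ is connected, so that $\{u\}$ and $G(P)-(\{u\}\cup N(u))$ are the only components of $G(P)-X$. This mirrors the proofs of \cref{prop:star-minus-facet,thm:polytope-minus-facet}: on each facet $F$ through $u$, \cref{prop:cube-cutsets} (with $y=u$ and $Y$ the neighbours of $u$ in $F$) yields a spanning strongly connected $(d-2)$-subcomplex of the part of $F$ outside $\{u\}\cup N(u)$, while on facets not through $u$ one uses \cref{thm:polytope-minus-facet}; gluing these across ridges, routing facet--ridge paths around $u$ with \cref{lem:star-minus-facet-F}, produces a spanning strongly connected $(d-2)$-subcomplex of $G(P)-(\{u\}\cup N(u))$, whose graph is connected by \cref{prop:connected-complex-connectivity}. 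This step should be routine compared with the previous one.
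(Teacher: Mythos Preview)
Your reduction and set-up are sound, and the observation that no ridge lies entirely in $X$ is correct and useful. But the ``main obstacle'' paragraph does not go through. You assert that if $w^{*}$ has a neighbour $z\notin X$ then every facet through $w^{*}$ is wild with $w^{*}$ as its distinguished vertex; this is false. Any facet $F$ containing the edge $w^{*}z$ has $w^{*}$ and $z$ in the same piece of $G(F)-X$, so $w^{*}$ is not isolated there and cannot be the distinguished vertex of $F$. Your bookkeeping therefore never begins. You also have not ruled out facets with $|V(F)\cap X|\ge d$ (your own parenthetical caveat), and without that bound \cref{prop:known-cube-cutsets} does not apply to every wild facet.

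The paper's argument is structured quite differently. It first proves, via \cref{cor:Removing-facet-(d-2)-connectivity}, that $|V(F)\cap X|\le d-1$ for \emph{every} facet $F$. For $|X|\le 2d-4$ one then picks a facet $F$ disconnected by $X$, removes it to get a $(d-2)$-connected graph $G'$, notes $|X\setminus V(F)|\le d-3$ so that $G'-X$ is connected, and shows that any vertex of $V(F)\setminus X$ that cannot reach $G'-X$ must have all its neighbours in $X$; this produces $u$ directly. The boundary case $|X|=2d-3$ is genuinely harder: assuming for contradiction that no vertex has its full neighbourhood in $X$, one constructs a valid facet--ridge path between carefully chosen facets $F_{u}$ and $F_{v}$. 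For $d\ge 5$ a cardinality count on the ridges suffices, but for $d=4$ (where $|X|=5$) the paper isolates an obstructing local pattern of two adjacent wild facets (``Configuration~A'') and eliminates it by a pigeonhole count on the ridges containing two vertices of $X$ across four independent facet--ridge paths supplied by \cref{lem:star-minus-facet-F}. None of this is captured by your sketch, and the $d=4$ case in particular needs a dedicated argument that your outline does not provide.
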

 
\begin{proof} Let $0\le \alpha\le d-3$ and let $P$ be a cubical $d$-polytope with minimum degree at least $d+\alpha$.  Let $G:=G(P)$.
 
We first prove that $P$ is $(d+\alpha)$-connected. The case of $d=3$ follows from Balinski's theorem. So assume $d\ge 4$.  Let $X$ be a minimum separator of $P$. Throughout the proof, let $u$ and $v$ be two distinct vertices that belong to $G-X$  and are disconnected by $X$. The theorem follows from a number of claims that we prove next.

\begin{claim}\label{cl:d-1} If $|X|\le d+\alpha$ then, for any facet $F$, the cardinality of $X\cap V(F)$ is at most $d-1$.
\end{claim}  
\begin{claimproof}  
Suppose otherwise and let $F$ be a facet with $|X\cap V(F)|\ge d$. 
Let \[G':=G-V(F).\] According to \cref{cor:Removing-facet-(d-2)-connectivity}, the subgraph $G'$ is $(d-2)$-connected. Since there are at most $\alpha\le d-3$ vertices in $V(G')\cap X$, removing from $G'$ the vertices in $V(G')\cap X$ doesn't disconnect $G'$. 

We show there is a $u-v$ path in $G- X$, which would be a contradiction and prove the claim. If $u,v\in V(G')\setminus X$ then there is a $u-v$ path in $G'-X$, as $G'-X$ is connected. So assume $u\in V(F)\setminus X$. Since $u$ has degree at least $d+\alpha$ and since every vertex in $F$ has at least $d+\alpha-(d-1)=\alpha+1$ neighbours outside $F$ (in $G'$), at least one of them, say $u_{G'}$, is in $V(G') \setminus X$. Likewise either $v\in V(F)\setminus X$  and  there is a neighbour $v_{G'}$ of $v$ in $V(G')\setminus X$ or $v\in G'-X$. Therefore, if $v\in V(F)\setminus X$ then there is a $u-v$ path $L$ in $G- X$ that contains a subpath $L'$ in $G'$ between the vertices $u_{G'}$ and $v_{G'}$ in $V(G') \setminus X$; that is, $L=uu_{G'}L'v_{G'}v$. If instead $v\in G'- X$ then there is a $u-v$ path $L$ in $G- X$ passing through the vertex $u_{G'}$ and containing a subpath $L':=u_{G'}-v$ in $G'-X$; that is, $L=uu_{G'}L'v$. Hence there is always a $u-v$ path in $G-X$, and thus, $G$ is not disconnected by $X$, a contradiction.   
\end{claimproof} 

\begin{claim}\label{cl:d-facets} If $|X|\le d+\alpha$, then there exist facets $F_{1},\ldots,F_{d}$ of $P$ such that $G(F_{i})$ is disconnected by $X$ for each $i\in[1,d]$.
\end{claim}

\begin{claimproof} 
Suppose by way of contradiction that $X$ disconnects the graphs of at most $k$ facets $F_{1},\ldots, F_{k}$ of $P$ with $k\le d-1$.  We find a $u-v$ path in $G-X$, which would contradict $X$ being a separator of $G$.

There are at least $d$ facets containing $u$ and there are at least $d$ facets containing $v$. As a result, we can pick facets $K_{u}$ and $K_{v}$ with $u\in K_{u}$ and $v\in K_{v}$ whose graphs are not disconnected by $X$; that is $K_{u},K_{v}\not\in\{F_{1},\ldots,F_{k}\}$. If $K_{u}=K_{v}$ then we can find a $u-v$ path in  $G(K_{u})- X$. So assume $K_{u}\ne K_{v}$. Since $\B(P)$ is a strongly connected $(d-1)$-complex and since there are at least $d$ independent $(d-1,d-2)$-paths from $K_{u}$ to $K_{v}$ in $\B(P)$ (\cref{lem:star-minus-facet-F}(i)), there exists a $(d-1,d-2)$-path $J_{1}\ldots J_{n}$ in $\B(P)$ with $J_{1}=K_{u}$ and $J_{n}=K_{v}$ such that $\{J_{1},\ldots,J_{n}\}\cap \{F_{1},\ldots,F_{k}\}=\emptyset$. As a consequence, the subgraphs $G(J_{i})$ are not disconnected by $X$. 

Construct a $u-v$ path  $L$ by traversing the facets $J_{1},\ldots, J_{n}$ as follows: find a path $L_{1}$ in $J_{1}$ from $u$ to a vertex in $J_{1}\cap J_{2}$, then a path $L_{2}$ in $J_{2}$ from $J_{1}\cap J_{2}$ to $J_{2}\cap J_{3}$ and so on up to a path $L_{n-1}$ in $J_{n-1}$ from $J_{n-2}\cap J_{n-1}$ to $J_{n-1}\cap J_{n}$; here use the connectivity of the subgraphs $G(J_{1})- X,\ldots, G(J_{n-1})- X$. Finally, find a path $L_{n}$ in $J_{n}=K_{v}$ from $J_{n-1}\cap J_{n}$ to the vertex $v$ using the connectivity of $G(J_{n})- X$. The path $L$ is the concatenation of the paths $L_{1},\ldots,L_{n}$. 

The aforementioned concatenation works as long as there is at least one vertex in $V(J_{\ell}\cap J_{\ell+1})\setminus X$ for each $\ell\in [1,n-1]$. For $d\ge 4$, it follows that $|V(J_{\ell}\cap J_{\ell+1})|=2^{d-2}\ge d$, which  is greater that $|V(J_{\ell})\cap X|\le d-1$ by \cref{cl:d-1}. Hence $V(J_{\ell}\cap J_{\ell+1})\setminus X$ is nonempty, and consequently, the $u-v$ path $L$ always exists and completes the proof of the claim. 
\end{claimproof}

\begin{claim}\label{cl:connectivity} If $|X|\le d+\alpha$ then $|X|=d+\alpha$. 
\end{claim}

\begin{claimproof} Let $F$ be a facet of $P$ whose graph is disconnected by $X$, which by \cref{cl:d-facets} exists. \cref{cl:d-1} together with Balinski's theorem ensures that $|V(F)\cap X|=d-1$. Let $G':=G-V(F)$.  By \cref{cor:Removing-facet-(d-2)-connectivity}, $G'$ is  a $(d-2)$-connected subgraph of $G$. 

Suppose that a minimum separator $X$ has size at most $d-1+\alpha$; we show that $X$ does not disconnect $G$ by finding a $u-v$ path $L$ between the vertices $u$ and $v$ of $G-X$, which would be a contradiction. 

There are at most $\alpha\le d-3$ vertices in $V(G')\cap X$, and so removing $V(G')\cap X$ from $G'$ doesn't disconnect $G'$. 

If $u$ and $v$ are both in $G'$ then there is a $u-v$ path in $G'$ that is disjoint from $X$. So assume that $u\in V(F)\setminus X$. Let $X_{1}$ denote the set of neighbours of $u$ in $G'$; then $|X_{1}|\ge \alpha+1$, since $u$ has at least $d+\alpha$ neighbours in $P$, with exactly $d-1$ of them in $F$. As a consequence, there is a neighbour $u_{G'}$ of $u$ in $V(G') \setminus X$. Likewise either $v\in V(F)\setminus X$ and there is a neighbour $v_{G'}$ of $v$ in $V(G')\setminus X$ or $v\in G'-X$. If $v\in V(F) \setminus X$, there is a $u-v$ path in $G- X$ that passes through the vertices $u_{G'}$ and $v_{G'}$ of $V(G') \setminus X$. If instead $v\in G'-X$, there is a $u-v$ path $L$ in $G- X$ that includes a subpath $L'$ in $G'-X$ between  $u_{G'}$ and $v$ so that $L=uu_{G'}L'v$. Hence we always have a $u-v$ path in $G-X$. This contradiction shows that a minimum separator has size {\it exactly} $d+\alpha$.
\end{claimproof}

{\bf From \cref{cl:connectivity} it follows that $P$ is $(d+\alpha)$-connected.} The structure of a minimum separator is settled in \cref{cl:separator-d-4,cl:separator-5-d-3}. For every $d\ge 4$, \cref{cl:separator-d-4} settles  the case $\alpha\le d-4$ and \cref{cl:separator-5-d-3}  the case $\alpha= d-3$. 

\begin{claim}\label{cl:separator-d-4} If $\alpha\le d-4$, then the set $X$ consists of the neighbours of some vertex and the minimum degree of $P$ is exactly $d+\alpha$.
\end{claim}
\begin{claimproof}
As in \cref{cl:connectivity}, let $F$ be a facet of $P$ whose graph is disconnected by $X$ and let $G':=G-V(F)$. Then $G'$ is a $(d-2)$-connected subgraph of $G$ (\cref{cor:Removing-facet-(d-2)-connectivity}). Besides, $|V(F)\cap X|=d-1$ by a combination of \cref{cl:d-1} and Balinski's theorem.

Since there are  {\it exactly} $\alpha+1\le d-3$ vertices in $V(G')\cap X$, removing $V(G')\cap X$ from $G'$ doesn't disconnect $G'$. We may therefore assume that $u\in V(F)\setminus X$.

If there is a path $L_{u}$  in $G-X$ from $u$ to a vertex $u_{G'}\in G'-X$ and a path $L_{v}$  in $G-X$ from $v$ to a vertex $v_{G'}$ in $G'-X$ so that $L_{u}$ and $L_{v}$ are both disjoint from $X$, then we get a $u-v$ path $L$ in $G- X$ defined as $L=uL_{u}u_{G'}L'v_{G'}L_{v}v$ where $L'$ is a path in $G'-X$ between $u_{G'}$ and $v_{G'}$. Recall the minimum degree of $u$ is at least $d+\alpha$.
 
We  may therefore assume that $u$ is in $V(F)\setminus X$ and that there is no such path $L_{u}$ in $G-X$  from $u$ to $G'-X$.  The set $X_{1}$ of neighbours of $u$ in $G'$ must then be a subset of $X$, and since $|X_{1}|\ge \alpha+1$, it follows that $X_{1}=V(G')\cap X$, and thus, that $|X_{1}|=\alpha+1$. In addition, every path of length two from $u$ to $G'$ passing through a neighbour of $u$ in $F$ contains some vertex from $X$; otherwise  the aforementioned path $L_{u}$ would exist. Let $X_{2}$ denote the vertices in $X$ that are present in a $u-V(G')$ path of length two passing through a neighbour of $u$ in $F$. Every vertex of $F$ has a neighbour in $G'$, and so there is a $u-V(G')$ path through each neighbour of $u$ in $F$, $d-1$ such neighbours in total. Since there are no triangles in $P$, we get $X_{1}\cap X_{2}=\emptyset$, which in turn implies that $X_{2}\subset V(F)$. Hence $|X_{2}|=d-1$, and every neighbour of $u$ in $F$ is in $X$.  Consequently, the degree of  $u$, $|X_{1}|+|X_{2}|$, is precisely $d+\alpha$, and the set $X$ consists of the $d+\alpha$ neighbours of $u$ in $P$,  as desired.
\end{claimproof}

\begin{claim}\label{cl:separator-5-d-3} If $\alpha=d-3$, then the set $X$ consists of the neighbours of some vertex and the minimum degree of $P$ is exactly $d+\alpha$.
\end{claim}

\begin{claimproof}  Proceed by contradiction: every vertex in $P$ has at least one neighbour outside $X$. 

By \cref{cl:d-1} there are at most $d-1$ vertices from $X$ in any facet $F$ of $P$. If the removal of $X$ disconnects the graph of a facet $F$, then there would be exactly $d-1$ vertices in $V(F)\cap X$, which constitute the neighbours in $F$ of some vertex  of $F$ (\cref{prop:known-cube-cutsets}). Consequently, the subgraph $G(F)-X$ would have exactly two components: one being a singleton $z(F)$ and another $Z(F)$ being $(d-3)$-connected by \cref{prop:cube-cutsets}; if $X$ doesn't disconnect $F$, we let $z(F)=\emptyset$ and let $Z(F):=G(F)-X$. Hence, for every facet $F$ of $P$, the subgraph $Z(F)$ is connected, and $V(F)=z(F)\cup V(Z(F))\cup (V(F)\cap X)$. Abusing terminology, if $z(F)\neq\emptyset$ we make no distinction between the set and its unique element.   

Since $u$ and $v$ are separated by $X$, every $u-v$ path in $G$ contains a vertex from $X$. Because the vertex $u$ has a neighbour $w$ not in $X$, there must exist a facet $F_{u}$ in which $u\in Z(F_{u})$: a facet containing the edge $uw$. Similarly, there exists a facet $F_{v}$ containing $v$ in  which $v\in Z(F_{v})$.     

Consider an arbitrary $(d-1,d-2)$-path $J_{1}\ldots J_{n}$ in $P$ with $J_{1}=F_{u}$ and $J_{n}=F_{v}$. If, for each $i\in [1,n-1]$, there is a vertex $y_{i}\in V(J_{i}\cap J_{i+1})$ with $y_{i}\in V(Z(J_{i}))\cap V(Z(J_{i+1}))$, then there would be a $u-v$ path $L$ in $G-X$ and the claim would hold.  Indeed, let $y_0:= u$ and $y_{n}: = v$. For all $i\in [0,n-1]$, there would  be a path $L_{i+1}$ in $Z(J_{i+1})$ from $y_i$ to $y_{i+1}$. Concatenating all these paths $L_{1},\ldots, L_{n}$, we would then have a $u-v$ path $L$ in $G-X$, giving a contradiction and settling the claim. We would say that a facet-ridge path $J_{1}\ldots J_{n}$ from $F_{u}$ to $F_{v}$ is  {\it valid} if the aforementioned vertex $y_{i}$ exists for each $i\in [1,n-1]$; otherwise it is {\it invalid}.
  
Hence {\bf it remains to show that, for some facet-ridge path $J_{1}\ldots J_{n}$ from $J_{1}=F_{u}$ to $J_{n}=F_{v}$,  there exists a vertex in $V(Z(J_{i}))\cap V(Z(J_{i+1}))$ for each $i\in [1,n-1]$ when $d\ge 4$.} In other words, {\bf it remains to show there exists a valid fact-ridge path from $F_{u}$ to $F_{v}$.} 

Take a facet-ridge  path $J_{1}\ldots J_{n}$ from $F_{u}$ to $F_{v}$ and suppose it is invalid; that is, $V(Z(J_{i}))\cap V(Z(J_{i+1}))=\emptyset$ for some $i\in [1,n-1]$. Then $V(Z(J_{i}))\cap V(J_{i+1})\subset z(J_{i+1})$. Therefore,
\begin{equation}
\begin{aligned}\label{eq:Claim4-1}  
	V(J_{i}\cap J_{i+1})=V(J_{i})\cap V(J_{i+1})&=\left[z(J_{i})\cup V(Z(J_{i}))\cup (V(J_{i})\cap X)\right]\cap V(J_{i+1})\\ 
	&\quad\subset z(J_{i})\cup z(J_{i+1})\cup (X\cap V(J_{i}\cap J_{i+1})).
\end{aligned} 
\end{equation}
If neither $G(J_{i})$ nor $G(J_{i+1})$ is disconnected by $X$, then $z(J_{i})=z(J_{i+1})=\emptyset$, and  by \cref{eq:Claim4-1} and \cref{cl:d-1}, \begin{equation}\label{eq:Claim4-2}2^{d-2}=|V(J_{i}\cap J_{i+1})|\le |X\cap V(J_{i})|\le d-1.\end{equation}  
If instead $G(J_{i})$ is disconnected by $X$, then $X\cap V(J_{i})$ consists of all the $d-1$ neighbours of $z(J_{i})$  in $J_{i}$ (\cref{prop:known-cube-cutsets}), and thus, $|X\cap V(J_{i}\cap J_{i+1})|\le d-2$. In this case, by \cref{eq:Claim4-1},
\begin{equation}\label{eq:Claim4-3}2^{d-2}=|V(J_{i}\cap J_{i+1})|\le 2+d-2=d.\end{equation} 
 
 \Cref{eq:Claim4-2} does not hold for $d\ge 4$, while \cref{eq:Claim4-3} only holds  for $d=4$, in which case it holds with equality.  As a consequence, if $d\ge 5$, every facet-ridge path from $F_{u}$ to $F_{v}$ is valid. As a result,  the aforementioned $u-v$ path $L$ in $G-X$ always exists for $d\ge 5$, a contradiction. {\bf This completes the case $d\ge 5$}.
  
 The case $d=4$ requires more work. Let \[X:=\{x_{1},\ldots,x_{5}\}.\] Suppose by way of contradiction that every facet-ridge path from $F_{u}$ to $F_{v}$ is invalid. Consider a particular such path $M:=J_{1}\ldots J_{n}$. Then $V(Z(J_{i}))\cap V(Z(J_{i+1}))=\emptyset$ for some $i\in [1,n-1]$, and for that index $i$, \cref{eq:Claim4-3} must hold with equality, which implies that \cref{eq:Claim4-1} must also hold with equality. Consequently, the following setting ensues. 

\begin{enumerate}
\item $|z(J_{i})\cup z(J_{i+1})|=2$; that is, $z(J_{i})\ne z(J_{i+1})$;
\item 
the graphs of the facets $J_{i}$ and $J_{i+1}$ are both disconnected by $X$;

\item the neighbours of $z(J_{i})$ in $J_{i}$ and of $z(J_{i+1})$ in $J_{i+1}$ are all from $X$; 
\item the ridge $R_{i}:=J_{i}\cap J_{i+1}$ consists of four vertices---namely, $z(J_{i})$, $z(J_{i+1})$ and two vertices from $X$, say $x_{1}$ and $x_{2}$;
\item each vertex $z(J_{i})$ and $z(J_{i+1})$ has a neighbour in $J_{i}\setminus J_{i+1}$ and $J_{i+1}\setminus J_{i}$, respectively; and
\item there is a vertex from $X$, say $x_{5}$, lying outside $J_{i}\cup J_{i+1}$. 
\end{enumerate}
Any pair of facets in this setting are said to be in {\it Configuration A} and the ridge in which they intersect is said to be {\it problematic}. For instance, the pair $(J_{i}, J_{i+1})$ is in Configuration A and the ridge $R_{i}$ is problematic; see \cref{fig:Aux-Conn-Thm-A}(a).  

For a facet-ridge path from $F_{u}$ to $F_{v}$ to be invalid, it must have a pair of facets in Configuration A.

 \begin{figure}
\includegraphics{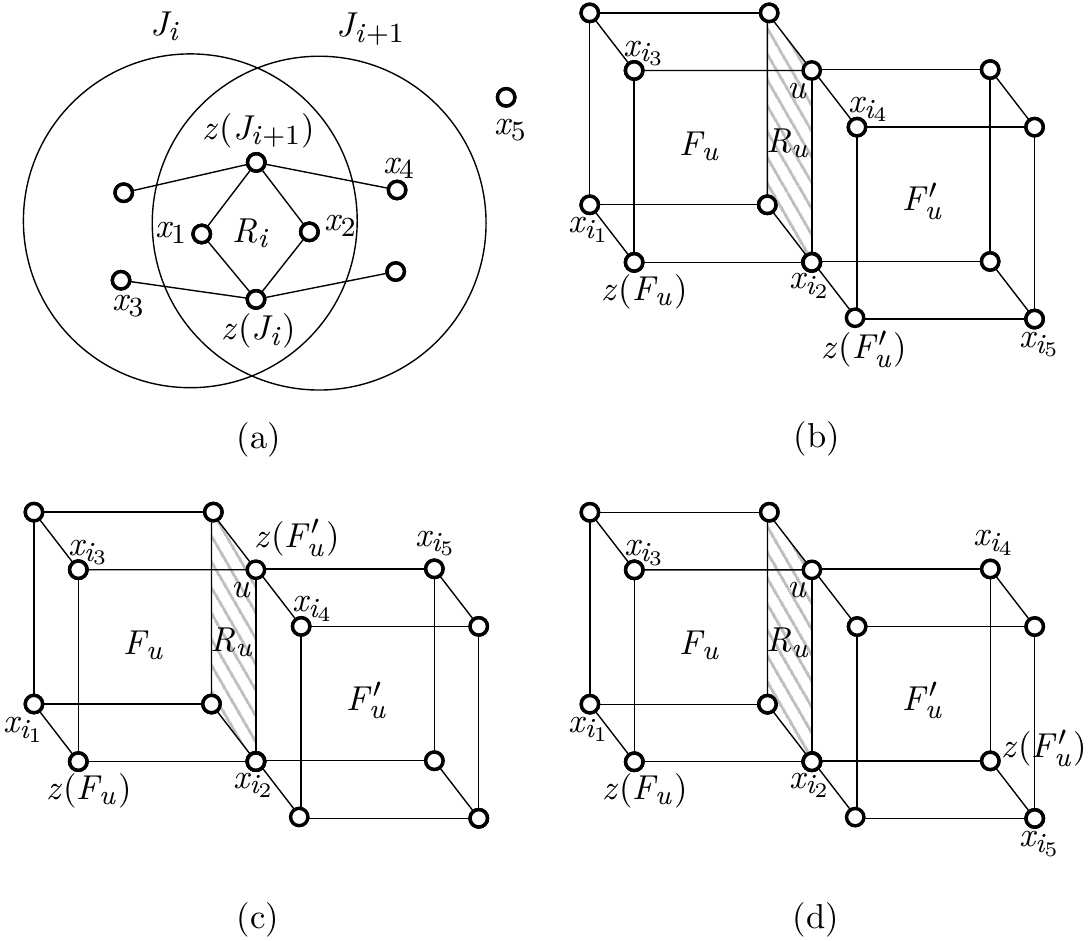}
\caption{Auxiliary figure for \cref{cl:separator-5-d-3} of \cref{thm:cubical-connectivity}. (a) Configuration A: two facets $J_{i}$ and $J_{i+1}$ whose graphs are disconnected by $X=\{x_{1},x_{2},x_{3},x_{4},x_{5}\}$ and a problematic ridge $R_{i}:=J_{i}\cap J_{i+1}$. (b)--(d) The facets $F_{u}$ and $F_{u}$ are both disconnected by $X$ and intersects at an edge. The ridge $R_{u}:=F_{u}\cap F_{u}''$ that defines a new facet $F_{u}''$ is highlighted.}\label{fig:Aux-Conn-Thm-A} 
\end{figure}

We want to be more careful when selecting the facets $F_{u}$ and $F_{v}$ and when selecting the facet-ridge path $M$ from $F_{u}$ to $F_{v}$. We require the following.
 \begin{equation}\label{claim-Fu-Fv}
\parbox{0.8\textwidth}{The facets $F_{u}$ and $F_{v}$ can be  picked so that their graphs are not disconnected by $X$; that is, $G(F_{u})-X$ and $G(F_{v})-X$ are both connected subgraphs of $G(F_{u})$ and $G(F_{v})$, respectively.} \tag{*} 
\end{equation} 
\begin{proof}[Proof of \eqref{claim-Fu-Fv}]  Suppose that the facet $F_{u}$ cannot be picked as desired. Then the graph of $F_{u}$ is disconnected by $X$, and by \cref{prop:known-cube-cutsets} there is a vertex $z(F_{u})\in G(F_{u})$ whose neighbours in $F_{u}$ are all from $X$. Say $X\cap V(F_{u})=\{x_{i_{1}},x_{i_{2}},x_{i_{3}}\}$. Recall that $u\in Z(F_{u})$. 
 
Since $u$ has degree at least five (it is nonsimple), it follows that there is a facet $F_{u}'$ in $P$ containing $u$ and intersecting $F_{u}$ at a vertex or an edge. Since $F_{u}'$ contains $u$, its graph must be disconnected by $X$ (otherwise it is the desired facet). Therefore $|X\cap V(F_{u}')|=3$, and thus, $X\cap V(F_{u})\cap V(F_{u}')\neq \emptyset$. As a consequence, we find that $F_{u}\cap F'_{u}$ is an edge between a vertex of $X$, say $x_{i_{2}}$, and $u$. It follows that $X\cap V(F'_{u})=\{x_{i_{2}},x_{i_{4}},x_{i_{5}}\}$. Three configurations are possible: \cref{fig:Aux-Conn-Thm-A}(b)--(d).

The argument remains unchanged in all the three configurations. Refer to \cref{fig:Aux-Conn-Thm-A}(b) for concreteness.  Consider the ridge $R_{u}$ of $F_{u}$ that contains the edge $ux_{i_{2}}$ but does not contain the vertex $x_{i_{3}}$; the ridge $R_{u}$ is highlighted in \cref{fig:Aux-Conn-Thm-A}(b). Let $F_{u}''$ be the facet of $P$ that intersects $F_{u}$ at $R_{u}$. Then $X\cap V(F_{u}'') \subseteq \{x_{i_{2}},x_{i_{4}}\}$, since $F_{u}\cap F_{u}''$ and $F_{u}'\cap F_{u}''$  are faces that contain $u$. Therefore, the graph of $F_{u}''$ is not disconnected by $X$ and $F_{u}''$ could have been chosen as $F_{u}$. As a consequence of this contradiction, the facet $F_{u}$ can be picked as desired.
 
Similar analysis shows that the facet $F_{v}$ can also be picked so that $G(F_{v})$ is not disconnected by $X$. This completes the proof of \eqref{claim-Fu-Fv}.
\end{proof} 

We are now ready to complete the proof of the claim by showing that we can always find a valid facet-ridge $F_{u}-F_{v}$ path. The existence of such a path would complete the proof of the claim.

There are at least four independent facet-ridge paths from $F_{u}$ to $F_{v}$ (\cref{lem:star-minus-facet-F}(i))---say $M_{a}, M_{b}, M_{c}$ and $M_{d}$---and at least four pairs of facets exhibiting  Configuration A---one per path. Each pair of facets in Configuration A  gives rise to a problematic ridge. We may assume that $M=M_{a}$. The ensuing four points are key. 
 
\begin{enumerate}
\item The facet $F_{u}$ or $F_{v}$ does not appear in any Configuration A (by Statement~\eqref{claim-Fu-Fv}). 
\item Any facet of $P$ other than $F_{u}$ and $F_{v}$ may appear in at most one facet-ridge $F_{u}-F_{v}$ path; in particular, it appears in at most one pair exhibiting Configuration A.
\item The problematic ridges are pairwise distinct, as the paths $M_{a},M_{b},M_{c},M_{d}$ as independent. 
\item Each problematic ridge appears  in precisely one of the paths $M_{a},M_{b},M_{c},M_{d}$.
\item Each nonproblematic ridge $R$ of $P$ present in a Configuration A appears in at most two paths in $\{M_{a},M_{b},M_{c},M_{d}\}$. This is so because $R$ is the intersection of two facets $F$ and $F'$, and the facet $F$ or $F'$ appears in at most one such path. 
\end{enumerate}

With a counting argument we show that Configuration A cannot occur in all the four paths. We count the ridges that contain two vertices from $X$ and are present in a Configuration A.

For every pair of facets $(J,J')$ exhibiting Configuration A, there are five ridges in $J\cup J'$ containing two vertices from $X$. For instance, for the pair $(J_{i}, J_{i+1})$ of \cref{fig:Aux-Conn-Thm-A}(a), the pairs $(x_{1},x_{2})$, $(x_{1},x_{3})$, $(x_{2},x_{3})$, $(x_{1},x_{4})$ and $(x_{2},x_{4})$ induce the five ridges. So, considering the four aforementioned $F_{u}-F_{v}$ paths, we have a total of twenty ridges that are present in a Configuration A and contain two vertices from $X$. Besides, there are ten ways of pairing two vertices from $X$, and thereby there are at most ten distinct ridges containing two vertices from $X$. 

Each problematic ridge appears in precisely one Configuration A; there are at least four problematic ridges, and therefore, there are at most six nonproblematic ridges. Each nonproblematic ridge that contains two vertices from $X$ appears in two facets, and consequently in at most two pairs of facets exhibiting Configuration A (that is,  in at most two Configurations A).  We account for the ten ridges containing two vertices from $X$ in the four Configurations A: at least four problematic ridges and at most six  nonproblematic ones. This means that we can have at most $4+6\times 2=16$ ridges that contain two vertices from $X$ and appear in the four Configurations A. Since the four Configurations A require twenty ridges containing two vertices from $X$, we can choose a $(d-1,d-2)$-path  $F_{u}-F_{v}$  in which Configuration A doesn't occur. {\bf This completes the case $d=4$}, and with it the proof of the claim.
\end{claimproof} 
 
We now complete the proof of the theorem. \cref{cl:separator-d-4,cl:separator-5-d-3} ensure that, for $d\ge 4$, a minimum separator $X$ with cardinality at most $2d-3$ in a cubical $d$-polytope consists of the neighbours of a vertex. Thus {\bf the function $g(d)$ exists and satisfies $2d-3\le g(d)$.} 

From \cref{cor:f-g} it then follows that $f(d)\ge 2d-2$; in other words, a cubical $d$-polytope with minimum degree $\delta$ is $\min\{\delta,2d-2\}$-connected. This completes the proof of the theorem.
\end{proof}

A simple corollary of \cref{thm:cubical-connectivity} is the following.
 
\begin{corollary}\label{cor:cubical-nsimple-connectivity}  A cubical $d$-polytope with no simple vertices is $(d+1)$-connected. 
\end{corollary}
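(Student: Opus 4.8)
The goal is to deduce Corollary~\ref{cor:cubical-nsimple-connectivity} from the Connectivity Theorem, so the plan is entirely about bookkeeping on the quantities $\delta$ and $\min\{\delta,2d-2\}$.

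First I would dispose of the low-dimensional case. For $d=3$ there are no cubical $3$-polytopes without simple vertices: Euler's formula forces a cubical $3$-polytope to have a vertex of degree $3$ (as already noted in the introduction, such a polytope has $2|V(P)|-4$ edges and hence minimum degree three). So the statement for $d=3$ holds vacuously, and it suffices to treat $d\ge 4$.

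Next, for $d\ge 4$, let $P$ be a cubical $d$-polytope with no simple vertices, so every vertex has degree at least $d+1$; that is, the minimum degree $\delta$ satisfies $\delta\ge d+1$. Since $d\ge 4$ we have $2d-2\ge d+2 > d+1$, hence $\min\{\delta,2d-2\}\ge \min\{d+1,2d-2\}=d+1$. By \cref{thm:cubical-connectivity} the graph $G(P)$ is $\min\{\delta,2d-2\}$-connected, and therefore it is $(d+1)$-connected. This proves the corollary.

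There is essentially no obstacle here — the only point that needs care is the $d=3$ case, where the Connectivity Theorem would only give $\min\{\delta,4\}$-connectivity and $\delta\ge 4$ would be needed to conclude $4$-connectivity; but that case is empty, so the argument goes through. One could alternatively phrase the whole thing uniformly by noting that for $d\ge 3$ a cubical $d$-polytope with no simple vertex has $\delta\ge d+1$, and then $\min\{\delta,2d-2\}\ge d+1$ already for $d\ge 4$ while for $d=3$ the hypothesis is never met; either way the conclusion follows immediately from \cref{thm:cubical-connectivity}.
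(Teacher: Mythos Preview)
Your proof is correct and follows the same approach as the paper, which simply states the corollary as immediate from \cref{thm:cubical-connectivity} without further argument. One minor simplification: your separate treatment of $d=3$ is unnecessary, since for $d=3$ one has $2d-2=4=d+1$, so $\min\{\delta,2d-2\}\ge d+1$ holds directly whenever $\delta\ge d+1$; the case is vacuous anyway, but the Connectivity Theorem already covers it uniformly.
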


As we mentioned in the introduction an open problem that nicely arises from \cref{thm:cubical-connectivity} is \cref{prob:bounds}.

\section{Acknowledgments} The authors would like to thank the anonymous referees for their very detailed comments and suggestions. The presentation of the paper has greatly benefited from their input.

%\section{Concluding remarks}

%---REFERENCES---
 
%\providecommand{\bysame}{\leavevmode\hbox to3em{\hrulefill}\thinspace}

\providecommand{\MR}{\relax\ifhmode\unskip\space\fi MR }
% \MRhref is called by the amsart/book/proc definition of \MR.

\providecommand{\MRhref}[2]{%
  \href{http://www.ams.org/mathscinet-getitem?mr=#1}{#2}
}
    
\providecommand{\href}[2]{#2}

\end{document}